\documentclass[final,1p,times]{elsarticle}
\usepackage{amsmath}
\allowdisplaybreaks[4]
\usepackage{amsthm}
\usepackage{amscd}
\usepackage{amsmath}
\usepackage{amsfonts}
\usepackage{amssymb}
\usepackage{graphicx}
\newtheorem{theorem}{Theorem}[section]

\newtheorem{lemma}[theorem]{Lemma}
\newtheorem{remark}{Remark}

\usepackage{mathrsfs}
\usepackage{titletoc}
\numberwithin{equation}{section}

\renewcommand{\d}{\delta}

\newcommand{\D}{\Delta}
\newcommand{\ra}{\rightarrow}

\newcommand{\f}{\frac}
\newcommand{\al}{\alpha}

\renewcommand{\l}{\lambda}
\renewcommand{\L}{\Lambda}
\newcommand{\be}{\begin{equation}}
\renewcommand{\ra}{\rightarrow}
\newcommand{\ee}{\end{equation}}
\newcommand{\bea}{\begin{eqnarray}}
\newcommand{\eea}{\end{eqnarray}}
\newcommand{\bna}{\begin{eqnarray*}}
\newcommand{\ena}{\end{eqnarray*}}
\renewcommand{\o}{\omega}

\newcommand{\iv}{\int_{V}}

\newcommand{\me}{\mathrm{e}}

\renewcommand{\le}{\left}
\newcommand{\ri}{\right}

\newcommand{\ve}{\vert}
\newcommand{\V}{\Vert}

\newcommand{\na}{\nabla}
\newcommand{\upl}{\upsilon'_{\lambda}}
\newcommand{\up}{\upsilon}

\newcommand{\va}{\varphi}
\newcommand{\wl}{w_{\lambda}}
\journal{***}
\begin{document}

\begin{frontmatter}

\title{ Existence theorems for a generalized  Chern-Simons equation on finite graphs}
\author{Jia Gao}
\ead{s20213101929@cau.edu.cn}
\address{Department of Applied Mathematics, College of Science, China Agricultural University,  Beijing, 100083, P.R. China}
\author{Songbo Hou \corref{cor1}}
\ead{housb@cau.edu.cn}
\address{Department of Applied Mathematics, College of Science, China Agricultural University,  Beijing, 100083, P.R. China}

\cortext[cor1]{Corresponding author: Songbo Hou}

\begin{abstract}
Denote by  $G=(V,E)$ a finite graph.   We study  a generalized Chern-Simons equation
\begin{eqnarray*}
\Delta u=\lambda \mathrm{e}^u(\mathrm{e}^{bu}-1)+4\pi\sum\limits_{j=1}^{N}\delta_{p_j}
\end{eqnarray*}
on $G$, where $\lambda$  and $b$ are  positive constants; $N$ is a positive integer;  $p_1, p_2, \cdot\cdot\cdot, p_N$ are distinct vertices of $V$
and $\delta_{p_j}$ is  the Dirac delta mass at $p_j$. We prove that there exists a critical value $\lambda_c$ such that the equation has a solution if $\lambda\geq \lambda_c$ and the equation has no solution if $\lambda<\lambda_c$. We also prove that if  $\lambda>\lambda_c$ the equation has at least two solutions which include  a local minimizer for the corresponding functional and a mountain-pass type
solution. Our results extend and complete those of Huang et al. (Comm. Math. Phys. 377(1) (2020)  613-621), Hou et al. (Calc. Var. Partial Dif. 61(4) (2022), Art.139).

\end{abstract}

\begin{keyword}  finite graph\sep Chern-Simons equation\sep  upper and lower solution\sep  mountain pass theorem

\MSC [2020] 35J91, 05C22

\end{keyword}

 \end{frontmatter}

\section{Introduction}

The study on the  Abelian Chern-Simons model  is an interesting topic. In this model, the Yang-Mills or
the Maxwell field term are replaced by the  specific Higgs potential.  The  Chern-Simons gauge field yields that vortices  are magnetically and elecrically charged. A natural question is how to get the existence of condensates or periodic multivortices. Under the assumption that the Higgs potential takes a sextic form, Caffarelli and Yang \cite{CY95} reduced the self-dual Chern-Simons equations to the following form

\be\label{cs1}
\D u=\l \me^u(\me^{u}-1)+4\pi\sum\limits_{j=1}^{N}\delta_{p_j}
\ee
on $\Omega$,
where $\l>0$, $\Omega\subset \mathbb{R}^2$ is a doubly periodic domain, $\delta_p$ is the Dirac distribution concentrated at $p\in \Omega$.
Then they got the existences of solutions by the rigorous
mathematical analysis.
The solutions of Eq.(\ref{cs1}) are of  great significance in many fields of physics. The topological solutions, non-topological
solutions, and solutions over a doubly periodic domain  have been extensively studied \cite{W91, SY92,SY95, Ch07, TG07,CKL}.

Chen and Han \cite{SH13} studied a generalized Chern-Simoms equation

\be\label{csg}
\D u=\l \me^u(\me^{bu}-1)+4\pi\sum\limits_{j=1}^{N}\delta_{p_j},\,\,\,\,\,\,x\in \Omega,
\ee
where $\l>0$, $b>0$, $\Omega\subset \mathbb{R}^2$ is a doubly periodic domain, $\delta_p$ is the Dirac distribution concentrated at $p\in \Omega$.
They extended the results in \cite{CY95}. As an application, they solved an open problem about the existence of solutions of the equation arising in the self-dual Maxwell-Chern-Simons model \cite{CC99}.

In this paper, we consider the discrete form of Eq.(\ref{csg}) on a finite graph.  A  finite graph $G=(V,E)$ is composed  of vertices $V $ and edges $E$.  We assume that $G$ is connected throughout the paper.  We call $G$ is weighted if each edge $xy\in E$ is assigned a weight $\o_{xy}$ which is positive and symmetric. Let $\mu :V\ra \mathbb{R}^{+}$ be a finite measure.  If vertex $y$ is adjacent
to vertex $x$,  we write  $y\sim x$. For any function $u:V\ra \mathbb{R}$, we define the
$\mu$-Laplace operator   by
$$\D u(x)=\f{1}{\mu(x)}\sum\limits_{y\sim x}\omega_{xy}(u(y)-u(x)).$$
The associated gradient form reads
\be\label{lpd}\Gamma(u,\up)=\f{1}{2\mu(x)}\sum\limits_{y\sim x}\omega_{xy}(u(y)-u(x))(\up(y)-\up(x)).\ee
Write $\Gamma(u)=\Gamma(u,u)$. The length of its gradient  is denoted by
$$\ve \na u\ve (x) =\sqrt {\Gamma (u)(x)}=\le( \f{1}{2\mu(x)}\sum\limits_{y\sim x}\omega_{xy}\le(u(y)-u(x)\ri)^2\ri)^{\f{1}{2}}.$$
For any  function $f:V\ra \mathbb{R}$,  the integral of $f$  over $V$ is defined by
$$\int_V fd\mu=\sum\limits_{x\in V}\mu(x)f(x).$$
Define the Sobolev space $$W^{1,2}(V)=\le\{u\,\Big|\, u:V\rightarrow \mathbb{R},\,\iv\le(\ve \na u\ve^2 +u^2\ri)d\mu<+\infty\ri\}.$$  The  norm of $u\in W^{1,2}(V)$ is

$$\V u\V_{W^{1,2}(V)}=\le(\iv\le(\ve \na u\ve^2 +u^2\ri)d\mu\ri)^{1/2}.$$

We consider the  generalized Chern-Simons equation derived in  \cite{SH13}, i.e.,
\be\label{cs}
\D u=\l \me^u(\me^{bu}-1)+4\pi\sum\limits_{j=1}^{N}\delta_{p_j}
\ee
on a graph $G=(V,E)$,  where $\l$ and $b$ are   positive constants; $N$ is a positive integer;  $p_1, p_2, \cdot\cdot\cdot, p_N$ are distinct vertices of $V$
and $\d_{p_j}$ is  the Dirac delta mass at $p_j$.

We prove the following existence results for Eq.(\ref{cs} ) on $G$.
\begin{theorem}
There exists a critical value $\l_c$ depending on the graph satisfying
$$\l_c\geq  \f{(b+1)^{b+1}}{b}\f{4\pi N}{\ve V\ve},$$
such that
\begin{enumerate}[(i)]
\item   Eq.(\ref{cs}) has a solution if $\l\geq \l_c$, however,   Eq.(\ref{cs}) has no solution if $\l<\l_c$.

\item   Eq.(\ref{cs}) has at least two solutions if $\l>\l_c$.
\end{enumerate}

\end{theorem}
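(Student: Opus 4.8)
The plan is to regularise the Dirac masses and pass to a variational formulation. Let $u_0$ solve $\D u_0 = 4\pi\sum_{j=1}^{N}\d_{p_j} - \frac{4\pi N}{\ve V\ve}$, which is solvable since the right-hand side has zero $\mu$-average; writing $u = u_0 + v$ turns Eq.(\ref{cs}) into the regular equation $\D v = \l\me^{u_0+v}(\me^{b(u_0+v)}-1) + \frac{4\pi N}{\ve V\ve}$ on $V$, whose solutions are exactly the critical points of
$$J_\l(v) = \tfrac12\iv\ve\na v\ve^2 d\mu + \tfrac{\l}{b+1}\iv\me^{(b+1)(u_0+v)}d\mu - \l\iv\me^{u_0+v}d\mu + \tfrac{4\pi N}{\ve V\ve}\iv v\, d\mu.$$
Integrating Eq.(\ref{cs}) over $V$ (so that the Laplacian term vanishes) gives the necessary identity $\l\iv(\me^u - \me^{(b+1)u})d\mu = 4\pi N$, and a one-line application of the maximum principle at a maximum vertex shows every solution obeys $u\le 0$. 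Maximising the pointwise quantity $\me^u - \me^{(b+1)u}$ over $u\le0$ bounds the left side by $\l\ve V\ve$ times an explicit constant, which yields the positive lower bound for any admissible $\l$, hence for $\l_c$, and in particular rules out solutions for small $\l$.

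Set $\L = \{\l>0:\ \text{Eq.(\ref{cs}) has a solution}\}$. First I would produce sub- and supersolutions: $\bar u\equiv0$ is always a supersolution, while any fixed $\underline u<0$ becomes a subsolution once $\l$ is large (the positive term $\l\me^{\underline u}(1-\me^{b\underline u})$ then dominates $\D\underline u$ and the Dirac data), so the sub/supersolution theorem on $G$ gives $\L\ne\emptyset$. Monotonicity of $\L$ is the key structural point: if $u_\l$ solves the equation at $\l$ then, because $u_\l\le0$ forces $\me^{bu_\l}-1\le0$, one checks $\D u_\l - [\l'\me^{u_\l}(\me^{bu_\l}-1)+4\pi\sum_{j=1}^{N}\d_{p_j}] = (\l-\l')\me^{u_\l}(\me^{bu_\l}-1)\ge0$ for every $\l'>\l$; thus $u_\l$ is a subsolution at $\l'$, it lies below the supersolution $0$, and a solution at $\l'$ follows. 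Hence $\L$ is an interval with $\l_c=\inf\L$. To close the interval at $\l_c$ I would take $\l_n\downarrow\l_c$ with solutions $u_n\le0$, write $u_n=u_0+\bar v_n+v_n'$ (mean plus mean-zero part), bound $\V\na v_n'\V$ by testing against $v_n'$ (using $\ve\me^{u_n}(\me^{bu_n}-1)\ve\le1$ and the Poincar\'e inequality), bound $\bar v_n$ above from $u_n\le0$ and below from the integral identity (if $\bar v_n\ra-\iy$ its left side would vanish, contradicting the positive limit $4\pi N/\l_c$), and extract a convergent subsequence by finite-dimensionality to obtain a solution at $\l_c$. This proves (i).

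For (ii), fix $\l>\l_c$ and choose $\l_c<\l'<\l$ with solution $u_{\l'}<0$; then $u_{\l'}$ is a strict subsolution at $\l$ and $0$ a supersolution, and minimising $J_\l$ over the order interval $[\,u_{\l'}-u_0,\,-u_0\,]$ (a compact box in the finite-dimensional space $W^{1,2}(V)$) yields a minimiser $v_1$. Using the strict sub/supersolution inequalities together with the strong maximum principle on the connected graph $G$, I would show $u_{\l'}<u_0+v_1<0$ strictly, so $v_1$ is an interior point of the box and hence a local minimiser of $J_\l$ on $W^{1,2}(V)$. Since $J_\l(v_1-t\mathbf1)\ra-\iy$ as $t\ra+\iy$ (the linear term $\frac{4\pi N}{\ve V\ve}\iv v\,d\mu$ beats the vanishing exponentials), $J_\l$ is unbounded below, so together with the local minimiser it has the mountain-pass geometry, and a minimax value $c>J_\l(v_1)$ produces a second, distinct critical point.

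The step I expect to be most delicate is the compactness underlying the mountain pass, namely verifying that $J_\l$ satisfies the Palais--Smale condition, which on $\mathbb R^{\ve V\ve}$ is not automatic for a functional unbounded below. The plan is to show directly that any sequence with $J_\l(v_k)\ra c$ and $J_\l'(v_k)\ra0$ is bounded: testing $J_\l'(v_k)$ against the constant $\mathbf1$ recovers the approximate identity $\l\iv\me^{(b+1)(u_0+v_k)}d\mu = \l\iv\me^{u_0+v_k}d\mu - 4\pi N + o(1)$, whose mismatched exponential growth rules out $\bar v_k\ra+\iy$; testing against the mean-zero part $v_k'$ controls $\V\na v_k'\V$; and the energy bound $J_\l(v_k)\ra c$ then excludes $\bar v_k\ra-\iy$. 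Once boundedness is established, finite-dimensionality supplies a convergent subsequence, Palais--Smale holds, and the mountain-pass theorem applies. A secondary technical point is the strict interior-minimiser claim, for which the strong maximum principle on $G$ and the strictness of the subsolution at $\l$ are the essential ingredients.
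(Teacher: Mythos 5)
Your proposal is correct in substance and follows the paper's overall architecture — the decomposition $u=u_0+v$, sub/supersolutions, the interval structure of the solvable set $\Lambda$, a priori bounds to close the interval at $\lambda_c$, then a constrained minimizer plus mountain pass — but it takes genuinely different routes at three technical junctures, each more elementary than the paper's. For the critical case, the paper proves uniform $W^{1,2}$ bounds on the maximal solutions via H\"older and the Trudinger--Moser inequality (its Lemmas 2.7--2.8) and uses monotonicity of $\upsilon_\lambda$ in $\lambda$; your route (the integral identity, Poincar\'e control of the mean-zero part, a contradiction if the means drift to $-\infty$, then finite-dimensional compactness) avoids both. For the local minimizer, the paper minimizes over the one-sided, non-compact set $\{v\geq \upsilon_*\}$ with $\upsilon_*$ the solution at $\lambda_c$ and runs Tarantello's perturbation argument; you minimize over the compact order interval between $u_{\lambda'}-u_0$ (with $\lambda_c<\lambda'<\lambda$) and $-u_0$, which makes existence of the minimizer trivial and, pleasantly, makes part (ii) independent of the critical-case existence. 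For Palais--Smale, your mean/mean-zero testing argument replaces the paper's pointwise path-connectivity argument and is cleaner. Note also that the ``sub/supersolution theorem on $G$'' you invoke is itself proved in the paper by monotone iteration (its Lemma 2.2), so it is not a citation-free step.

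Two places need more care than your sketch provides. First, the interior-minimizer claim: you cannot apply the strong maximum principle to $u_0+v_1$ before knowing that $v_1$ satisfies the equation, which at the boundary of the box is exactly what is in question. The correct route is the one-sided Euler--Lagrange (KKT) inequalities: at a vertex $x$ where the lower constraint is active, minimality over the box forces $\Delta v_1(x)\leq \lambda\mathrm{e}^{(u_0+v_1)(x)}\bigl(\mathrm{e}^{b(u_0+v_1)(x)}-1\bigr)+\frac{4\pi N}{|V|}$, while contact from above gives $\Delta v_1(x)\geq \Delta(u_{\lambda'}-u_0)(x)$, contradicting the strict subsolution inequality (strict at every vertex because $u_{\lambda'}<0$ everywhere); at the upper face the supersolution defect is strict only at the vortices, so contact must be propagated along the connected graph until it reaches some $p_j$. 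This works, but it is the graph analogue of the paper's Tarantello-type computation — real work, not a corollary of the maximum principle. Second, in your PS verification, a PS sequence need not satisfy $u_0+v_k\leq 0$, so you cannot bound the nonlinearity pointwise; before testing against $v_k'$ you must first extract from the constant-test identity, via H\"older/Jensen, the bounds $\int_V\mathrm{e}^{u_0+v_k}d\mu\leq C$ and $\int_V\mathrm{e}^{(b+1)(u_0+v_k)}d\mu\leq C$, and only then is the nonlinear term controlled by $C\|\nabla v_k'\|_2$. Finally, two smaller points: the assertion that the minimax level satisfies $c>J(v_1)$ from mere local minimality needs a strict ridge (the paper makes the same unproved assertion in its (3.14)); and if you carry out your maximization of $\mathrm{e}^u-\mathrm{e}^{(b+1)u}$ over $u\leq 0$, the explicit constant is $b(b+1)^{-(b+1)/b}$, giving $\lambda_c\geq \frac{(b+1)^{(b+1)/b}}{b}\frac{4\pi N}{|V|}$, whose exponent for $b\neq 1$ differs from the $(b+1)^{b+1}$ stated in the theorem — the paper's own claimed minimum of $\mathrm{e}^t(\mathrm{e}^{bt}-1)$ contains the same discrepancy.
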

\begin{remark}
If $b=1$, then Eq.(\ref{cs}) becomes Eq.(\ref{cs1}). There also exist  multiple solutions for Eq.(\ref{cs1}) on graphs. This completes the results of Huang et al.\cite{HLY}, Hou et al. \cite{HS}.
\end{remark}
There are a lot of research on theories of partial differential equations on graphs. For the discrete Chern-Simons equations,
Huang, Lin and Yau \cite{HLY} studied Eq.(\ref{cs1}) on graphs and got the existence results except the critical case. We solved  the critical case \cite{HS}, and   got  the existence results for a generalized Chern-Simons-Higgs equation, which was also studied in \cite{LZ}. Huang, Wang and Yang \cite {HWY} considered the
relativistic Abelian Chern-Simons equations and established the
existence of multiple solutions. Related results also include \cite{CH, CHS}.  The results for  Yamabe type equations include \cite{GLY16, GJJ, ZL18, GJ19, LZ19, ZL19}. The results for Kazdan-Warner equations include \cite{GLY, GJ18, ZC18, Ge20, LY20}. One may refer to \cite{HSZ, Hou21} for the biharmonic equations.

  The rest of the paper is arranged as follows. In Section Two, we first use the upper and lower solutions method to get the existence of the single solution for the non-critical case. Then we use the  prior estimates to prove the existence for the critical case.  In Section Three, we use the mountain pass theorem to get the existence of  multiple solutions.

\section{Single Solution}

Denote $u=u_0+\up$, where $u_0$ is the solution of the equation
\be \label{up1}\D u= -\f{4\pi N}{\ve V\ve}+4\pi \sum\limits_{j=1}^{N}\delta_{p_j}.\ee
Then we can reduce Eq.(\ref{cs}) into
\be\label{cs2}
\D \up =\l\me^{u_0+\up}\le(\me^{b(u_0+\up)}-1\ri)+\f{4\pi N}{\ve V\ve}.
\ee
We introduce the definition of upper and lower solutions.
If a function $\up_+$ satisfies $$\D \up_+ \leq \l\me^{u_0+\up_+}\le(\me^{b(u_0+\up_+)}-1\ri)+\f{4\pi N}{\ve V\ve}$$ on $V$, we call it an upper solution of (\ref{cs2}). Analogously, a function $\up_-$ is said to be  a lower solution of (\ref{cs2}) if it satisfies
\be\label{sub}
\D \up_- \geq \l\me^{u_0+\up_-}\le(\me^{b(u_0+\up_-)}-1\ri)+\f{4\pi N}{\ve V\ve}
\ee
on $V$.
We introduce Lemma 4.1 in \cite{HLY}, which is  referred to as the maximum principle.
\begin{lemma}

Let $G=(V,E)$ be a finite graph.
If for some constant $K>0$, $\D  u(x) -Ku(x) \geq 0$ for all $x\in V$, then $u\leq 0$ on $V$ .
\end{lemma}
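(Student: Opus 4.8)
The plan is to exploit the finiteness of the vertex set $V$, which guarantees that $u$ attains its maximum over $V$, and then to inspect the sign of $\D u$ at a maximizing vertex. This is the discrete analogue of the classical maximum principle, where the finiteness of $G$ plays the role that compactness plays in the continuous setting, so no boundary conditions or decay hypotheses are required.

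First I would pick a vertex $x_0 \in V$ at which $u$ achieves its maximum; such a vertex exists precisely because $V$ is finite. Arguing by contradiction, I would suppose the conclusion fails, so that $u(x_0) = \max_{V} u > 0$. The heart of the argument is then to evaluate $\D u$ at $x_0$. Since $u(y) \leq u(x_0)$ for every $y \sim x_0$ and each weight $\omega_{x_0 y}$ is positive, every summand of
$$\D u(x_0) = \f{1}{\mu(x_0)} \sum_{y \sim x_0} \omega_{x_0 y}\le(u(y) - u(x_0)\ri)$$
is nonpositive, whence $\D u(x_0) \leq 0$.

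On the other hand, the hypothesis $\D u(x_0) - K u(x_0) \geq 0$ together with $K > 0$ and $u(x_0) > 0$ forces $\D u(x_0) \geq K u(x_0) > 0$. These two inequalities are incompatible, which yields the desired contradiction; hence $u(x_0) \leq 0$, and since $x_0$ is a maximizer, $u \leq 0$ on all of $V$.

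The argument carries no genuine obstacle, but the step deserving care is the sign bookkeeping at the maximizing vertex: the inequality $\D u(x_0) \leq 0$ uses the positivity of the weights together with the fact that $x_0$ is a \emph{global} maximizer (so that each difference $u(y)-u(x_0)$ is nonpositive), while the strict inequality $K u(x_0) > 0$ relies on the assumed positivity of $K$. I would emphasize that neither connectedness nor any further structure of $G$ enters the proof; only the finiteness of $V$, which secures the existence of the maximizer, is essential.
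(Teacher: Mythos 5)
Your proof is correct: the maximum-principle argument at a global maximizer $x_0$ (where $\D u(x_0)\leq 0$ by positivity of the weights, contradicting $\D u(x_0)\geq Ku(x_0)>0$) is exactly the standard one. Note that the paper itself does not prove this lemma but imports it as Lemma 4.1 of Huang--Lin--Yau \cite{HLY}, and your argument coincides with the proof given there, including the correct observation that only finiteness of $V$, not connectedness, is needed.
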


Set $\up_0=-u_0$.  Let $\{\up_n\}$ be the iterative sequence determined by the scheme
\be \label{cs3}
\left\{
\begin{aligned}&\le(\D-K\ri)\up_n=\l\me^{u_0+\up_{n-1}}\le(\me^{b(u_0+\up_{n-1})}-1\ri)-K\up_{n-1}+\f{4\pi N}{\ve V\ve},\\
&n=1,2,...,
\end{aligned}
\right.
\ee
where $K$ denotes a positive constant.

\begin{lemma}
 If $K>b\l$, then the sequence $\{\up_n\}$  satisfies
$$\up_0> \up_1> \up_2>\cdot\cdot\cdot> \up_n>\cdot\cdot\cdot
>\up_{-},$$  where $\up_-$ is any lower solution of (\ref{cs2}). Hence,  if (\ref{cs2}) admits a lower solution,  $\{\up_n\}$ converges to a solution of (\ref{cs2}), which is the maximal solution.
 \end{lemma}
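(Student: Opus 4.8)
The plan is to run a monotone iteration, using the maximum principle (the stated Lemma) to compare consecutive iterates and to sandwich the whole sequence between $\upsilon_0$ and an arbitrary lower solution. The algebraic heart of the argument is to exploit the hypothesis $K > b\lambda$ so as to make the map $t \mapsto \Phi_x(t) := \lambda \me^{u_0+t}(\me^{b(u_0+t)}-1) - Kt + \frac{4\pi N}{\ve V \ve}$ appearing on the right-hand side of the scheme (\ref{cs3}) strictly decreasing on the range of $t$ actually visited by the iterates.

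First I would record that, writing $s = u_0(x)+t$, one has $\Phi_x'(t) = \lambda\bigl((b+1)\me^{(b+1)s} - \me^{s}\bigr) - K$, and that on the half-line $s \leq 0$ the quantity $(b+1)\me^{(b+1)s}-\me^s$ attains its maximum value $b$ at $s=0$: it tends to $0$ as $s\to-\infty$, has a single interior critical point which is a minimum, and increases up to $s=0$. Hence $K > b\lambda$ forces $\Phi_x'(t) \leq \lambda b - K < 0$ whenever $u_0 + t \leq 0$, that is, whenever $t \leq \upsilon_0 = -u_0$. This is precisely where the hypothesis $K > b\lambda$ enters, and isolating the sharp constant $b$ (rather than the crude bound $b+1$) is the main obstacle; it also confines the whole analysis to the region $s\leq 0$, so I must simultaneously verify that every iterate remains $\leq \upsilon_0$.

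The iteration then proceeds as follows. Since $G$ is finite and connected, $\Delta - K$ is negative definite, hence invertible, and (\ref{cs3}) determines each $\upsilon_n$ uniquely. For the first step, using $u_0 + \upsilon_0 = 0$ a direct computation gives $(\Delta - K)(\upsilon_1 - \upsilon_0) = 4\pi\sum_{j}\delta_{p_j} \geq 0$, so the Lemma yields $\upsilon_1 \leq \upsilon_0$, and the strong maximum principle (connectedness of $G$ together with the right-hand side being positive at the $p_j$) upgrades this to $\upsilon_1 < \upsilon_0$. For the inductive step, with $w = \upsilon_n - \upsilon_{n-1}$ one has $(\Delta - K)w = \Phi_x(\upsilon_{n-1}) - \Phi_x(\upsilon_{n-2})$; since $\upsilon_{n-1} < \upsilon_{n-2} \leq \upsilon_0$ all lie in the region $s \leq 0$ on which $\Phi_x$ is decreasing, the right-hand side is $\geq 0$ and $\not\equiv 0$, whence $\upsilon_n < \upsilon_{n-1}$ by the Lemma and the strong maximum principle. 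This preserves $\upsilon_n \leq \upsilon_0$ and closes the induction.

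For the lower barrier I would first show that any lower solution satisfies $\upsilon_- < \upsilon_0$: putting $w = u_0 + \upsilon_-$, the defining inequality (\ref{sub}) becomes $\Delta w \geq \lambda\me^{w}(\me^{bw}-1) + 4\pi\sum_{j}\delta_{p_j}$, so at a vertex where $w$ is maximal a strictly positive value of $w$ would force $\Delta w > 0$, contradicting maximality, while the $\delta$-terms at the $p_j$ then give the strict inequality. An analogous comparison, now based on $(\Delta-K)(\upsilon_- - \upsilon_n) \geq \Phi_x(\upsilon_-) - \Phi_x(\upsilon_{n-1}) \geq 0$ (valid because $\upsilon_- < \upsilon_{n-1} \leq \upsilon_0$ again lie in the monotonicity region), shows $\upsilon_- < \upsilon_n$ for every $n$. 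Finally, at each of the finitely many vertices the sequence $\{\upsilon_n(x)\}$ is decreasing and bounded below by $\upsilon_-(x)$, hence converges; passing to the limit in (\ref{cs3}), which is legitimate since the nonlinearity is continuous, produces a function solving (\ref{cs2}). As every solution is itself a lower solution, it lies below each $\upsilon_n$ and therefore below the limit, so the limit is the maximal solution.
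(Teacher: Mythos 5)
Your proof is correct and follows essentially the same route as the paper: the same monotone iteration with the maximum principle (Lemma 2.1), the same first-step identity $(\Delta-K)(\upsilon_1-\upsilon_0)=4\pi\sum_j\delta_{p_j}$, the same connectedness/Dirac-mass argument for strictness, and the same sandwiching by an arbitrary lower solution followed by passing to the limit. The only cosmetic difference is that you package the key use of $K>b\lambda$ as strict monotonicity of $t\mapsto\Phi_x(t)$ via a derivative bound, where the paper applies the mean value theorem to differences and bounds $\mathrm{e}^{\xi}\bigl[(b+1)\mathrm{e}^{b\xi}-1\bigr]\leq b$ for $\xi\leq 0$ — the identical estimate in disguise.
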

 \begin{proof}

 We use the inductive method.
  By (\ref{cs3}),  we obtain
 $$\le(\D-K\ri)\up_1=-K\up_0+\f{4\pi N}{\ve V\ve}, $$  which together with (\ref{up1}) yields
 \be\label{ps1}\le(\D-K\ri)(\up_1-\up_0)=4\pi \sum\limits_{j=1}^{N}\delta_{p_j}\geq 0.\ee By Lemma 2.1, we get
 $\up_1(x)\leq \up_0(x)$ for all  $x\in V$.
 Furthermore, we can prove $\up_1<\up_0$ on $V$ by contradiction.
 Suppose $(\up_1-\up_0)(x_0)=0$ for some $x_0\in V$. Obviously,  $x_0$ is the maximum point of $\up_1-\up_0$ on $V$. In view of (\ref{ps1}),  we have
 $$\D(\up_1-\up_0)(x_0)\geq K(\up_1-\up_0)(x_0)\geq 0. $$
 Noting the definition of the $\mu$-Laplace, we conclude that if $x\sim x_0$, $(\up_1-\up_0)(x)=(\up_1-\up_0)(x_0)$.  Since $G$ is connected, we infer that $(\up_1-\up_0)(x)=(\up_1-\up_0)(x_0)$ for any $x\in V$, which contradicts (\ref{ps1}).  Hence we have  $\up_1<\up_0$ on $V$.

 Suppose that $$\up_0>\up_1>\cdot\cdot\cdot> \up_k.$$
By  (\ref{cs3}) and noting $K>b\l$, we calculate
\bna\begin{aligned}
(\D-K)(\up_{k+1}-\up_k)&=\l\le[\me^{u_0+\up_k}\le(\me^{b(u_0+\up_k)}-1\ri)-\me^{u_0+\up_{k-1}}\le(\me^{b(u_0+\up_{k-1})}-1\ri)\ri]-K(\up_k-\up_{k-1})\\
&=\l\me^{\xi}\le[(b+1)\me^{b\xi}-1\ri](\up_k-\up_{k-1})-K(\up_k-\up_{k-1})\\
&\geq (b\l-K)(\up_k-\up_{k-1})\\
&>0,
\end{aligned}
\ena
 where the second equality follows from  the mean value theorem, $u_0+\up_k\leq \xi\leq u_0+\up_{k-1}$, and
 we have used $\me^{\xi}\leq \me^{\up_{k-1}+u_0}\leq 1$, $\me^{b\xi}\leq 1$.
It follows from Lemma 2.1 that  $\up_{k+1}\leq\up_k$. Applying the same method  as in proving $\up_1<\up_0$, we obtain  $\up_{k+1}<\up_k$.
Hence, we establish $$\up_0> \up_1> \up_2>\cdot\cdot\cdot> \up_n>\cdot\cdot\cdot.$$

 We will also  prove that $\up_k>\up_-$ for all $k\geq 0$ by induction.  Noting (\ref{cs3}) and the definition of the lower solution,   we have
 \be\label{ps2}\begin{aligned}
\D(\up_--\up_0)&\geq \l\me^{\up_--\up_0}(\me^{b(\up_--\up_0)}-1)+4\pi \sum\limits_{j=1}^{N}\delta_{p_j}\\
&=\l b\me^{\up_--\up_0}\me^{b\xi}(\up_--\up_0)+4\pi \sum\limits_{j=1}^{N}\delta_{p_j},
\end{aligned}
\ee
where $\xi$ lies between $\up_--\up_0$ and $0$.   Denote $\up_-(x_0)-\up_0(x_0)=\max_{x\in V}\{\up_-(x)-\up_0(x)\}$ for some $x_0\in V$. Supposing $(\up_-(x_0)-\up_0(x_0))\geq 0$, then by (\ref{ps2}), we have
$$\D(\up_--\up_0)(x_0)\geq 0.$$
It yields that if $x\sim x_0$, then $(\up_--\up_0)(x)=(\up_--\up_0)(x_0)$.  Furthermore,  $(\up_--\up_0)(x)\equiv (\up_--\up_0)(x_0)$  for all $x\in V$ since $G$  is  connected.  Hence there is a contradiction with (\ref{ps2}) at $p_j$. It holds that  $\up_0>\up_-$ on $V$.

 Suppose that for some $k\geq 0$, $\up_k> \up_-$. In view of (\ref{sub}), (\ref{cs3}) and the fact $K>b\l$, we obtain
\bna\begin{aligned}
(\D-K)(\up_--\up_{k+1})&\geq \l\le[ \me^{\up_--\up_0}\le(\me^{b(\up_--\up_0)}-1\ri)-\me^{\up_k-\up_0}\le(\me^{b(\up_k-\up_0)}-1\ri)\ri]-K(\up_--\up_{k})\\
&=\l\me^{\xi}\le[(b+1)\me^{b\xi}-1\ri](\up_--\up_{k})-K(\up_--\up_{k})\\
&\geq (b\l-K)(\up_--\up_{k})\\
&>0,
\end{aligned}
\ena
where we have used the mean value theorem, $\up_--\up_0\leq \xi\leq \up_k-\up_0$. We have $\up_{k+1}>\up_-$  by taking the same procedure  as before.   Combining (\ref{cs3}) and the monotonicity of $\{\up_n\}$, we conclude that $\{\up_n\}$  converges to a solution of  (\ref{cs2}) pointwisely.  Such a solution is bigger than any other solution. This finishes the proof of Lemma 2.2.
\end{proof}
In order to get the existence of the solution of Eq.(\ref{cs2}), we only need to find a lower solution. We have the following lemma.
\begin{lemma}
If  $\l$ is sufficiently large, there is a solution of Eq.(\ref{cs2}).
\end{lemma}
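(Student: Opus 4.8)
The plan is to construct a single lower solution of (\ref{cs2}) for all sufficiently large $\l$ and then appeal to Lemma 2.2, which already guarantees that once a lower solution exists the iterative sequence $\{\up_n\}$ converges to a solution. Thus the whole task reduces to exhibiting one $\up_-$ satisfying (\ref{sub}). The idea I would use is to freeze the argument of the two exponentials by shifting $u_0$: set $\up_-=-u_0-a$ with a constant $a>0$ to be chosen, so that $u_0+\up_-\equiv -a$ is constant on $V$. This choice also satisfies $\up_-<\up_0=-u_0$ and $u_0+\up_-\le 0$, so it is compatible with the ordering and the sign conditions used in Lemma 2.2.

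Next I would substitute this candidate into (\ref{sub}). Since the Laplacian annihilates constants, (\ref{up1}) gives $\D\up_-=\D(-u_0)=\f{4\pi N}{\ve V\ve}-4\pi\sum_{j=1}^{N}\delta_{p_j}$, while $\me^{u_0+\up_-}=\me^{-a}$. After cancelling the common term $\f{4\pi N}{\ve V\ve}$, the lower-solution inequality (\ref{sub}) collapses to the pointwise requirement $-4\pi\sum_{j=1}^N\delta_{p_j}(x)\ge \l\me^{-a}(\me^{-ba}-1)$ for every $x\in V$.

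Then I would verify this at the two kinds of vertices. At a vertex $x\notin\{p_1,\dots,p_N\}$ the left side vanishes while the right side equals $\l\me^{-a}(\me^{-ba}-1)\le 0$ because $a>0$, so the inequality holds automatically. At $x=p_j$ the left side is $-4\pi/\mu(p_j)<0$, and the inequality becomes $\l\,\me^{-a}\bigl(1-\me^{-ba}\bigr)\ge \f{4\pi}{\mu(p_j)}$. The factor $h(a)=\me^{-a}(1-\me^{-ba})$ is strictly positive for $a>0$, so after fixing any such $a$ (optimally $a=\f{\ln(b+1)}{b}$, which maximizes $h$ at $\f{b}{(b+1)^{(b+1)/b}}$) the inequality holds for all $j$ as soon as $\l\ge \max_{1\le j\le N}\f{4\pi}{\mu(p_j)\,h(a)}$. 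Hence $\up_-=-u_0-a$ is a genuine lower solution for every large $\l$, and Lemma 2.2 delivers a solution of (\ref{cs2}).

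The only real obstacle here is the presence of the singular source $4\pi\sum_j\delta_{p_j}$: the inequality (\ref{sub}) must be checked vertex by vertex, and it is precisely at the vertices $p_j$, where the Dirac masses are active, that the construction needs the forcing $\l$ to be large. Everything else is a routine computation, and the constant shift $-u_0$ is the device that turns the spatially varying nonlinearity into a single scalar inequality.
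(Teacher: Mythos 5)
Your proof is correct, and it rests on the same pillar as the paper's---exhibit an explicit lower solution and invoke Lemma 2.2---but the lower solution you construct is genuinely different. The paper simply takes the constant $\up_-=-c$ with $c>\max_V u_0$: then $\D\up_-=0$, and since $V$ is finite the quantity $\me^{u_0-c}\le(\me^{b(u_0-c)}-1\ri)$ is bounded above by a strictly negative constant, so the inequality $0>\l\me^{u_0-c}\le(\me^{b(u_0-c)}-1\ri)+\f{4\pi N}{\ve V\ve}$ holds at every vertex once $\l$ is large; the Dirac masses never enter, because they are already absorbed into $u_0$ in the reduced equation. You instead take $\up_-=-u_0-a$, which freezes the argument of the nonlinearity ($u_0+\up_-\equiv -a$) at the price of a nonzero Laplacian $\D\up_-=\f{4\pi N}{\ve V\ve}-4\pi\sum_j\delta_{p_j}$ that reintroduces the singular sources; the lower-solution inequality is then automatic off $\{p_1,\dots,p_N\}$ and must be forced by large $\l$ precisely at the $p_j$, where it reads $\l\me^{-a}\le(1-\me^{-ba}\ri)\geq 4\pi/\mu(p_j)$. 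Your computations there are correct, including the optimization $a=\ln(b+1)/b$ giving $h(a)=b/(b+1)^{(b+1)/b}$. What your route buys is an explicit quantitative threshold, $\l\geq 4\pi(b+1)^{(b+1)/b}\big/\le(b\min_j\mu(p_j)\ri)$, independent of $u_0$; what the paper's route buys is brevity, since a constant test function kills the Laplacian and reduces the verification to a one-line compactness remark. Both arguments correctly reduce the lemma to Lemma 2.2.
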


\begin{proof}
Since $G$ is finite, $u_0$ is a bounded function.  There exists a positive constant $c$ such that $u_0-c<0$. Set $\up_-=-c$. We see that if $\l$ is sufficiently large, then
$$\D \up_- =0> \l\me^{u_0-c}\le(\me^{b(u_0-c)}-1\ri)+\f{4\pi N}{\ve V\ve}.$$
This implies that $\up_-$ is a lower solution of (\ref{cs2}).
 Consequently,  Eq.(\ref{cs2}) admits a solution. We complete the proof.
\end{proof}
 Define $f(t)=\me^t(\me^{bt}-1)$, $t\in (-\infty, 0]$. A simple computation implies that $f$ has a unique minimum $-\f{b}{(b+1)^{b+1}}$.
 If $\up$ is a solution of Eq.(\ref{cs2}), it is also a lower solution of Eq.(\ref{cs2}). Then Lemma 2.2 implies that $u_0+\up<0$.
 Thus,  we obtain
 $$\D \up \geq -\f{b\l}{(b+1)^{b+1}}+\f{4\pi N}{\ve V\ve}.$$
 Integrating both sides of the above inequality over $V$, we get
\be\label{ncc}
\l\geq \f{(b+1)^{b+1}}{b}\f{4\pi N}{\ve V\ve}.
\ee
 If Eq.(\ref{cs2}) has a solution, then $\l$ must satisfy (\ref{ncc}).
 \begin{lemma}\label{set}
 There is a critical value $\l_c$ satisfying
 $$\l_c\geq \f{(b+1)^{b+1}}{b}\f{4\pi N}{\ve V\ve},$$
  such that Eq.(\ref{cs2}) admits a solution for $\l>\l_c$,  while Eq.(\ref{cs2})  admits no solution for  $\l<\l_c$.
 \end{lemma}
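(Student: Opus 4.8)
The plan is to realize $\l_c$ as the infimum of the solvable set and then to show that this set is a half-line. First I would introduce
$$\L=\{\l>0:\text{Eq.(\ref{cs2}) admits a solution}\}.$$
Lemma 2.3 guarantees that $\L$ is nonempty, since a solution exists for all sufficiently large $\l$, while the necessary condition (\ref{ncc}) shows that every $\l\in\L$ obeys $\l\geq\f{(b+1)^{b+1}}{b}\f{4\pi N}{\ve V\ve}$, so $\L$ is bounded below by this quantity. I would then define $\l_c=\inf\L$, and the claimed lower bound $\l_c\geq\f{(b+1)^{b+1}}{b}\f{4\pi N}{\ve V\ve}$ is immediate.

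The heart of the argument is a monotonicity (comparison) step: if Eq.(\ref{cs2}) is solvable at some $\l_1$, then it is solvable at every $\l_2>\l_1$. Let $\up$ be a solution at $\l_1$. By the observation recorded just before the lemma, any solution is also a lower solution, so Lemma 2.2 forces $\up<\up_0=-u_0$, i.e.\ $u_0+\up<0$ on $V$, whence $\me^{u_0+\up}(\me^{b(u_0+\up)}-1)\leq0$ pointwise. Using $\l_2>\l_1$ together with this sign, I would compute
$$\D\up=\l_1\me^{u_0+\up}\le(\me^{b(u_0+\up)}-1\ri)+\f{4\pi N}{\ve V\ve}\geq\l_2\me^{u_0+\up}\le(\me^{b(u_0+\up)}-1\ri)+\f{4\pi N}{\ve V\ve},$$
so $\up$ is a lower solution of (\ref{cs2}) at the level $\l_2$. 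Lemma 2.2 then produces a genuine solution at $\l_2$, giving $\l_2\in\L$.

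With monotonicity in hand the conclusion follows quickly. Since $\l_c=\inf\L$, for any $\l>\l_c$ there is some $\l_1\in\L$ with $\l_1<\l$, and the comparison step upgrades this to $\l\in\L$; thus Eq.(\ref{cs2}) is solvable for every $\l>\l_c$. On the other hand, no $\l<\l_c$ can lie in $\L$ by the very definition of the infimum, so Eq.(\ref{cs2}) has no solution when $\l<\l_c$. This establishes the dichotomy asserted in the lemma.

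I expect the monotonicity step to be the main obstacle, and the only place where the structure of the nonlinearity really enters. Everything turns on the sign $\me^{u_0+\up}(\me^{b(u_0+\up)}-1)\leq0$, which is exactly what the bound $u_0+\up<0$ (itself a consequence of Lemma 2.2 applied to a solution viewed as its own lower solution) delivers; without this sign the inequality above would reverse and a solution at $\l_1$ would fail to be a lower solution at the larger level $\l_2$. Note that the endpoint $\l=\l_c$ is deliberately omitted from this statement and is treated separately by the a priori estimates announced in the introduction.
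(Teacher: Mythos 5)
Your proposal is correct and follows essentially the same route as the paper: define the solvable set $\Lambda$, use the sign condition $u_0+\up<0$ (from viewing a solution as a lower solution and invoking Lemma 2.2) to show that a solution at $\l_1$ is a lower solution at any $\l_2>\l_1$, conclude that $\Lambda$ is a half-line, and set $\l_c=\inf\Lambda$ with the lower bound coming from (\ref{ncc}). The only cosmetic difference is that the paper records the comparison inequality as strict, which is immaterial for the definition of a lower solution.
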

 \begin{proof}
 Denote
 $$\Lambda=\Big\{\l>0\,\big|  \text{ $\l $ is  such  that  (\ref{cs2})  has a solution}\Big\}.$$
 We claim that $\L$ is an interval. In fact, if $\l'\in \Lambda$, we can show that $[\l',+\infty)\subset \L.$  Let $\up'$ be the solution of
 Eq.(\ref{cs2}) with $\l=\l'$.  If $\l>\l'$, in view of $u_0+\up'<0$, we have
 $$\D \up'=\l'\me^{u_0+\up'}\le(\me^{b(u_0+\up')}-1\ri)+\f{4\pi N}{\ve V\ve} > \l\me^{u_0+\up'}\le(\me^{b(u_0+\up')}-1\ri)+\f{4\pi N}{\ve V\ve},$$
 which implies that $\up'$ is a lower solution of Eq.(\ref{cs2}) for $\l>\l'$. Therefore, Eq.(\ref{cs2}) has a solution if $\l>\l'$ and $ [\l',+\infty)\subset \Lambda$.

 Define
 $\l_c=\inf\{\l\,|\,\l\in \L\}$.  Noting (\ref{ncc}),  we obtain
 $$\l_c\geq \f{(b+1)^{b+1}}{b}\f{4\pi N}{\ve V\ve}$$ by taking the limit $\l\ra \l_c$.
 \end{proof}

Now,  we deal with the critical case $\l=\l_c$. In the following lemma, we  get the monotonicity of the solution of (\ref{cs2}) with respect to $\l$, which will be used later.
 \begin{lemma}
 Let $\{\up_{\l}\,|\,\l>\l_c\}$ be the family of maximal solutions of (\ref{cs2}). Then there holds $\up_{\l_1}>\up_{\l_2}$ if $\l_1>\l_2>\l_c$.
\end{lemma}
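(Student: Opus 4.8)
The plan is to show that the maximal solution at the smaller value $\l_2$ is itself a lower solution of (\ref{cs2}) at the larger value $\l_1$, to deduce $\up_{\l_1}\geq \up_{\l_2}$ from the maximal-solution property, and then to sharpen this to a strict inequality via the discrete maximum principle.

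First I would fix $\l_1>\l_2>\l_c$. Recall that any solution $\up$ of (\ref{cs2}) satisfies $u_0+\up<0$, so that $\me^{u_0+\up}\le(\me^{b(u_0+\up)}-1\ri)<0$ on $V$. Applying this with $\up=\up_{\l_2}$ and using $\l_1>\l_2>0$, I would compute
\[\D\up_{\l_2}=\l_2\me^{u_0+\up_{\l_2}}\le(\me^{b(u_0+\up_{\l_2})}-1\ri)+\f{4\pi N}{\ve V\ve}\geq \l_1\me^{u_0+\up_{\l_2}}\le(\me^{b(u_0+\up_{\l_2})}-1\ri)+\f{4\pi N}{\ve V\ve},\]
which is precisely the defining inequality (\ref{sub}) of a lower solution of (\ref{cs2}) at $\l=\l_1$; this is the same mechanism already exploited in the proof of Lemma \ref{set}. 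Since $\l_1>\l_c$, Eq.(\ref{cs2}) has a solution, so the scheme (\ref{cs3}) converges to the maximal solution $\up_{\l_1}$, and by Lemma 2.2 every lower solution lies below it. Taking the lower solution to be $\up_{\l_2}$ gives $\up_{\l_1}\geq \up_{\l_2}$ on $V$.

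To upgrade this to a strict inequality, I would set $w=\up_{\l_1}-\up_{\l_2}\geq 0$ and argue by contradiction. If $w(x_0)=0$ at some $x_0\in V$, then $x_0$ is a global minimum of $w$, so $\D w(x_0)\geq 0$ directly from the definition of the $\mu$-Laplace. On the other hand, subtracting the two equations and using $w(x_0)=0$, so that the two nonlinear terms coincide at $x_0$, gives
\[\D w(x_0)=(\l_1-\l_2)\me^{u_0(x_0)+\up_{\l_2}(x_0)}\le(\me^{b(u_0(x_0)+\up_{\l_2}(x_0))}-1\ri)<0,\]
a contradiction. Hence $w>0$ on $V$, that is $\up_{\l_1}>\up_{\l_2}$.

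The point I expect to be delicate is that the nonlinearity $f(t)=\me^{t}(\me^{bt}-1)$ is not monotone on $(-\infty,0]$ — it first decreases to a unique negative minimum and then increases back to $f(0)=0$ — so the two solutions cannot be compared by exploiting monotonicity of $f$. What makes the argument work instead is the sign of $f$, which is strictly negative on $(-\infty,0)$, evaluated at the contact point, combined with the discrete maximum principle that forces $\D w(x_0)\geq 0$ at a global minimum on the connected graph. This final contradiction step is the true crux, while the remaining steps only reuse the lower and upper solution machinery already in place.
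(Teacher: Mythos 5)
Your proposal is correct and follows essentially the same route as the paper: you show $\up_{\l_2}$ is a lower solution of (\ref{cs2}) at $\l_1$, invoke Lemma 2.2 and maximality to get $\up_{\l_1}\geq\up_{\l_2}$, and then derive a contradiction at a contact point using the strict negativity of $\me^{t}(\me^{bt}-1)$ for $t<0$ together with the sign of the $\mu$-Laplacian at an extremum of the difference. The only difference is cosmetic: the paper routes the strict-inequality step through the mean value theorem to obtain a global differential inequality (\ref{mo1}) before evaluating at the maximum point of $\up_{\l_2}-\up_{\l_1}$, whereas you evaluate the two equations directly at the contact point where the nonlinear terms cancel exactly --- a slightly leaner version of the same argument.
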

\begin{proof}
Assume that $\l_1>\l_2$ and the associated solutions are $\up_{\l_1}$, $\up_{\l_2}$ respectively.   Then by (\ref{cs2}), we have

\bna\begin{aligned}
\D \up_{\l_2}&=\l_2\me^{u_0+\up_{\l_2}}\le(\me^{b(u_0+\up_{\l_2})}-1\ri)+\f{4\pi N}{\ve V\ve}\\
&= \l_1\me^{u_0+\up_{\l_2}}\le(\me^{b(u_0+\up_{\l_2})}-1\ri)+\f{4\pi N}{\ve V\ve}\\
&\,\,\,\,\,+(\l_2-\l_1)\me^{u_0+\up_{\l_2}}\le(\me^{b(u_0+\up_{\l_2})}-1\ri)\\
&\geq \l_1\me^{u_0+\up_{\l_2}}\le(\me^{b(u_0+\up_{\l_2})}-1\ri)+\f{4\pi N}{\ve V\ve}.
\end{aligned}\ena
Thus,  $\up_{\l_2}$ is a lower solution of (\ref{cs2}) with $\l=\l_1$.  Then  by Lemma 2.2, we obtain  $\up_{\l_1}\geq \up_{\l_2}$.
Furthermore,
\be\label{mo1}\begin{split}
\D (\up_{\l_2}-\up_{\l_1})&=\l_2\me^{u_0+\up_{\l_2}}\le(\me^{b(u_0+\up_{\l_2})}-1\ri)-\l_1\me^{u_0+\up_{\l_1}}\le(\me^{b(u_0+\up_{\l_1})}-1\ri)\\
&=\l_1\me^{u_0+\up_{\l_2}}\le(\me^{b(u_0+\up_{\l_2})}-1\ri)-\l_1\me^{u_0+\up_{\l_1}}\le(\me^{b(u_0+\up_{\l_1})}-1\ri)\\
&\,\,\,\,\,+(\l_2-\l_1)\me^{u_0+\up_{\l_2}}\le(\me^{b(u_0+\up_{\l_2})}-1\ri)\\
&>\l_1\le[\me^{u_0+\up_{\l_2}}\le(\me^{b(u_0+\up_{\l_2})}-1\ri)-\me^{u_0+\up_{\l_1}}\le(\me^{b(u_0+\up_{\l_1})}-1\ri)\ri] \\
&=\l_1\me^{\xi}\le[(b+1)\me^{b\xi}-1\ri]\le(\up_{\l_2}-\up_{\l_1}\ri)\\
&\geq b\l_1(\up_{\l_2}-\up_{\l_1}),
\end{split}
\ee
where $u_0+\up_{\l_2}\leq \xi\leq u_0+\up_{\l_1}$.
Suppose that $\up_{\l_2}-\up_{\l_1}$ achieves  0 at some point $x_0\in V$.  Then by (\ref{mo1}),  we get
 $$\D(\up_{\l_2}-\up_{\l_1})(x_0)>0.$$ However, $\D(\up_{\l_2}-\up_{\l_1})(x_0)\leq 0$ according to the definition of the $\mu$-Laplace.
  There is a contradiction. Hence,  $\up_{\l_{2}}-\up_{\l_1}<0$ and we prove that $\up_{\l_1}>\up_{\l_2}$ if $\l_1>\l_2>\l_c$.
\end{proof}

Now, we estimate the bound of solutions of (\ref{cs2}) in $W^{1,2}(V)$.
We first introduce the  Poincar\'{e} inequality and the  Trudinger-Moser inequality which were proved in \cite{GLY}.
\begin{lemma}(the Poincar\'{e} inequality)

Let $G=(V,E)$ be a finite graph. Then there exists some constant $C$ depending only on $G$ such that
$$\iv u^2d\mu\leq C\iv \ve \na u\ve^2d\mu,$$ for all $u\in V^{\mathbb{R}}$ with $\iv ud\mu=0$, where $V^{\mathbb{R}}=\{u\,|\,u \,\text{is a real function}: V\ra\mathbb{R}\}$.
 \end{lemma}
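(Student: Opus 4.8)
The plan is to identify the sharp constant with the reciprocal of the first nonzero eigenvalue of the graph Laplacian, exploiting two features peculiar to a finite connected graph: the function space is finite-dimensional, and the Dirichlet energy has a trivial kernel once the constants are excluded. Throughout I would write the Dirichlet energy as $D(u)=\iv \ve \na u\ve^2 d\mu$. Unwinding the definition (\ref{lpd}) of the gradient form, a direct computation gives
$$D(u)=\f{1}{2}\sum_{x\in V}\sum_{y\sim x}\o_{xy}\le(u(y)-u(x)\ri)^2\geq 0,$$
so $D$ is a nonnegative quadratic form on $V^{\mathbb{R}}$. First I would characterize its kernel: since every weight $\o_{xy}$ is positive, $D(u)=0$ forces $u(y)=u(x)$ across every edge, and because $G$ is connected this propagates to give $u\equiv\mathrm{const}$ on $V$. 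Hence $D(u)=0$ if and only if $u$ is constant.

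Next I would set up a constrained minimization. Let $H=\le\{u\in V^{\mathbb{R}}\,\big|\,\iv u\,d\mu=0\ri\}$, a linear subspace of the finite-dimensional space $V^{\mathbb{R}}\cong\mathbb{R}^{\ve V\ve}$. On $H$ the only constant function is $0$, so the kernel characterization shows that $u\in H$ with $D(u)=0$ forces $u\equiv 0$. Define
$$\l_1=\inf\le\{D(u)\,\Big|\,u\in H,\ \iv u^2 d\mu=1\ri\}.$$
The constraint set is closed and bounded in $\mathbb{R}^{\ve V\ve}$, hence compact, and $D$ is continuous, so the infimum is attained at some $u^{*}$ with $\iv (u^{*})^2 d\mu=1$. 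In particular $u^{*}\neq 0$ and $u^{*}\in H$, whence $\l_1=D(u^{*})>0$.

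Finally, for an arbitrary $u\in V^{\mathbb{R}}$ with $\iv u\,d\mu=0$, the inequality is trivial when $u\equiv 0$; otherwise I would normalize $v=u\big/\le(\iv u^2 d\mu\ri)^{1/2}$, observe that $v\in H$ satisfies $\iv v^2 d\mu=1$, and apply the bound $D(v)\geq \l_1$. Since both $D$ and $\iv u^2 d\mu$ are homogeneous of degree two in $u$, this rearranges to $\iv u^2 d\mu\leq \f{1}{\l_1}\iv \ve \na u\ve^2 d\mu$, so the claim holds with $C=1/\l_1$. The one genuine obstacle is establishing $\l_1>0$, i.e. that the infimum is strictly positive rather than zero; this is precisely where connectedness (to pin down the kernel of $D$ as the constants) and compactness of the finite-dimensional constraint set (to guarantee the infimum is attained by a nonzero admissible function, rather than merely approached) are both indispensable.
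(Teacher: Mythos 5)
Your proof is correct. Note that the paper itself gives no proof of this lemma: it is quoted verbatim from reference \cite{GLY} (Grigor'yan--Lin--Yang), where the standard argument runs by contradiction --- one assumes no constant $C$ works, extracts a sequence $u_k$ with $\int_V u_k\,d\mu=0$, $\int_V u_k^2\,d\mu=1$ and $\int_V \vert\nabla u_k\vert^2 d\mu\to 0$, uses finite-dimensionality to pass to a convergent subsequence, and derives a contradiction because the limit would be a constant, mean-zero function of unit $L^2$-norm. Your route replaces the contradiction argument with a direct constrained minimization: you define $\lambda_1=\inf\{D(u):u\in H,\ \int_V u^2d\mu=1\}$, show the infimum is attained by compactness, and show it is positive because connectedness forces the kernel of the Dirichlet form to consist exactly of constants, which meet $H$ only at $0$. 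The two proofs rest on precisely the same two pillars (compactness of the finite-dimensional constraint set and the kernel characterization via connectedness), but yours buys something extra: it identifies the optimal constant $C=1/\lambda_1$ as the reciprocal of the first nonzero eigenvalue of the $\mu$-Laplacian, i.e.\ the spectral gap, rather than producing an unspecified constant by contradiction. Your compactness step is also sound as stated: since $\mu(x)>0$ for every $x$ and $V$ is finite, the set $\{u:\int_V u^2d\mu=1\}$ is a compact ellipsoid in $\mathbb{R}^{\vert V\vert}$, and intersecting it with the closed subspace $H$ preserves compactness.
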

\begin{lemma}\label{lm3}  (the Trudinger-Moser inequality)

Let $G=(V,E)$ be a finite graph. For any $\al>0$ and all functions $u\in V^{\mathbb{R}}$ with $\iv \ve \na u\ve^2 d\mu\leq 1$ and $\iv ud\mu=0$, there exists some constant $C>0$ depending only on $\al$
and $G$ such that
$$\iv \me^{\al u^2}d\mu\leq C.$$
\end{lemma}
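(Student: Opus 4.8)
The plan is to exploit the finiteness of $G$, which makes this discrete Trudinger--Moser inequality far more elementary than its continuous counterpart. The strategy is to upgrade the hypothesis $\iv\ve\na u\ve^2\,d\mu\leq 1$, via the Poincar\'e inequality just stated, into a \emph{uniform pointwise} bound on $u$, and then to estimate the exponential integral trivially as a finite sum of bounded terms. The mean-zero normalization $\iv u\,d\mu=0$ is exactly the hypothesis needed to invoke Poincar\'e.

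First I would apply the Poincar\'e inequality (Lemma~2.7) to the admissible function $u$. This yields a constant $C_1=C_1(G)$ with
$$\iv u^2\,d\mu\leq C_1\iv\ve\na u\ve^2\,d\mu\leq C_1,$$
so that $\iv u^2\,d\mu$ is bounded by a constant depending only on $G$. Next I would set $\mu_0=\min_{x\in V}\mu(x)$, which is strictly positive because $V$ is finite and $\mu$ is a positive measure. For every vertex $x\in V$ one then has $\mu_0\,u^2(x)\leq\mu(x)u^2(x)\leq\iv u^2\,d\mu\leq C_1$, and hence $u^2(x)\leq C_1/\mu_0$ uniformly in $x$. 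Thus every admissible $u$ is bounded in $L^\infty(V)$ by a constant depending only on $G$.

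Finally I would insert this pointwise bound directly into the exponential integral:
$$\iv\me^{\al u^2}\,d\mu=\sum_{x\in V}\mu(x)\me^{\al u^2(x)}\leq\me^{\al C_1/\mu_0}\sum_{x\in V}\mu(x)=\ve V\ve\,\me^{\al C_1/\mu_0}=:C,$$
where $\ve V\ve=\sum_{x\in V}\mu(x)$ is finite. The resulting constant $C$ depends only on $\al$ and $G$, exactly as claimed.

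The honest remark is that there is no real obstacle here: the delicacy of the classical Trudinger--Moser inequality on surfaces --- where sharp exponential integrability is a borderline Sobolev phenomenon --- disappears completely on a finite graph, since the finite-dimensional constraint forces an $L^\infty$ bound at once. The only points requiring minor care are that $\mu_0>0$ (which uses finiteness of $V$) and that every constant be tracked so as to depend solely on $\al$ and $G$; neither presents any genuine difficulty.
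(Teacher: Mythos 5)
Your proof is correct and is essentially the paper's own: the paper does not prove this lemma at all but quotes it from \cite{GLY}, and the argument there is exactly the one you give --- the Poincar\'e inequality yields $\int_V u^2\,d\mu\leq C_1$, finiteness of $V$ together with $\mu_{\min}=\min_{x\in V}\mu(x)>0$ upgrades this to a uniform pointwise bound $u^2(x)\leq C_1/\mu_{\min}$, and the exponential integral is then bounded by $|V|\,\mathrm{e}^{\alpha C_1/\mu_{\min}}$. Your closing remark is also the right diagnosis: the borderline nature of the continuous Trudinger--Moser inequality vanishes on a finite graph, which is why the cited proof is this short.
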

Applying the above lemmas and the integration  method, we get the following lemma.
\begin{lemma}
For any maximal solution $\up_{\l}$ of (\ref{cs2}),   we split it into two parts, namely, $\up_{\l}=\bar{\up}_{\l}+\up'_{\l}$, where $\bar{\up}_{\l}=\f{1}{\ve V\ve}\int_{V}\up_{\l} d\mu$ and $\up'_{\l}=\up_{\l}-\bar{\up}_{\l}$. Then $\up'_{\l}$ satisfies
$$\V \na\up'_{\l}\V_2\leq C\l, $$
where $C$ is a positive constant depending only on $\ve V\ve$.  Furthermore, we  obtain
$$\ve \bar{\up}_{\l}\ve\leq C(1+\l+\l^2)$$
and
$$\V \up_{\l}\V_{W^{1,2}(V)}\leq C(1+\l+\l^2).$$
\end{lemma}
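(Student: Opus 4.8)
The plan is to prove the three estimates in the order stated, since each feeds into the next. Throughout, write $w=u_0+\up_\l$ and recall from the discussion preceding the lemma that $w<0$ on $V$ and that $f(t)=\me^{t}(\me^{bt}-1)$ obeys $-\f{b}{(b+1)^{b+1}}\le f(t)\le 0$ for $t\le 0$, so that $\ve f(w)\ve\le \f{b}{(b+1)^{b+1}}$ pointwise. First I would establish the gradient bound. Since $\up_\l$ and $\upl$ differ by the constant $\bar{\up}_{\l}$, we have $\na\up_\l=\na\upl$ and $\iv\upl\,d\mu=0$. Multiplying (\ref{cs2}) by $\upl$, integrating over $V$, and integrating by parts gives
\[-\V\na\upl\V_2^2=\l\iv f(w)\,\upl\,d\mu,\]
because the constant term $\f{4\pi N}{\ve V\ve}$ pairs with $\upl$ to zero. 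Bounding $\ve f(w)\ve$, then using the Cauchy--Schwarz inequality and the Poincar\'{e} inequality on the zero-average function $\upl$, I obtain $\V\na\upl\V_2^2\le C\l\,\V\na\upl\V_2$, hence $\V\na\upl\V_2\le C\l$.

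Next, to control $\bar{\up}_{\l}$ I would integrate (\ref{cs2}) over $V$. Since $\iv\D\up_\l\,d\mu=0$ and $\iv\f{4\pi N}{\ve V\ve}\,d\mu=4\pi N$, this produces the key identity $\iv f(w)\,d\mu=-\f{4\pi N}{\l}$, equivalently $\iv\me^{w}(1-\me^{bw})\,d\mu=\f{4\pi N}{\l}$. The upper bound on $\bar{\up}_{\l}$ is then immediate: integrating the pointwise inequality $w<0$ gives $\iv u_0\,d\mu+\ve V\ve\,\bar{\up}_{\l}<0$, so $\bar{\up}_{\l}\le C$ with $C$ independent of $\l$. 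For the lower bound I would discard the factor $1-\me^{bw}\in(0,1)$ to get $\iv\me^{w}\,d\mu>\f{4\pi N}{\l}$, and then factor out the constant $\me^{\bar{\up}_{\l}}$, writing $\me^{\bar{\up}_{\l}}\iv\me^{u_0+\upl}\,d\mu>\f{4\pi N}{\l}$.

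The main obstacle is the resulting upper bound on $\iv\me^{u_0+\upl}\,d\mu$, and this is precisely where the Trudinger--Moser inequality (Lemma \ref{lm3}) and the quadratic growth in $\l$ enter. Setting $v=\upl/\V\na\upl\V_2$, so that $\V\na v\V_2=1$ and $\iv v\,d\mu=0$, Young's inequality gives $\upl\le \f{\V\na\upl\V_2^2}{4\al}+\al v^2$ for any $\al>0$; exponentiating and integrating yields $\iv\me^{\upl}\,d\mu\le \me^{\V\na\upl\V_2^2/(4\al)}\iv\me^{\al v^2}\,d\mu\le C\me^{C\l^2}$, after invoking Lemma \ref{lm3} and the bound $\V\na\upl\V_2\le C\l$ from the first step. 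Since $u_0$ is bounded, $\iv\me^{u_0+\upl}\,d\mu\le C\me^{C\l^2}$ as well. Substituting this into the inequality above and taking logarithms gives $\bar{\up}_{\l}>\log\f{4\pi N}{C\l}-C\l^2$, and using $\log\l\le\l\le 1+\l^2$ I conclude $\bar{\up}_{\l}\ge -C(1+\l^2)$. Combined with the upper bound, this gives $\ve\bar{\up}_{\l}\ve\le C(1+\l+\l^2)$.

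Finally, the $W^{1,2}(V)$ estimate follows by assembling the pieces. The Poincar\'{e} inequality gives $\V\upl\V_2\le C\V\na\upl\V_2\le C\l$, so $\V\up_\l\V_2\le \sqrt{\ve V\ve}\,\ve\bar{\up}_{\l}\ve+\V\upl\V_2\le C(1+\l+\l^2)$, while $\V\na\up_\l\V_2=\V\na\upl\V_2\le C\l$. Hence $\V\up_\l\V_{W^{1,2}(V)}\le \V\up_\l\V_2+\V\na\up_\l\V_2\le C(1+\l+\l^2)$, completing the three estimates. The one genuinely nontrivial ingredient is the exponential integral bound via Trudinger--Moser in the third paragraph; everything else is elementary manipulation of the equation together with Poincar\'{e} and Cauchy--Schwarz.
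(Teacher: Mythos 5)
Your proof is correct and follows essentially the same route as the paper: testing (\ref{cs2}) with $\upl$ plus Poincar\'{e} for the gradient bound, the pointwise negativity of $u_0+\up_\l$ for the upper bound on $\bar{\up}_{\l}$, and the integrated identity combined with the Trudinger--Moser inequality (via the same Young-type splitting of $\upl$) for the lower bound. The only differences are cosmetic, e.g.\ you bound $\ve f(w)\ve$ by $\f{b}{(b+1)^{b+1}}$ where the paper uses $1$, and you keep a general parameter $\al$ where the paper fixes $\al=1/4$.
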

\begin{proof}
In the following, we use $C$ to denote a positive constant which may vary from line to line.  Noting $\up_{\l}=\bar{\up}_{\l}+\up'_{\l}$, we multiply both sides of Eq.(\ref{cs2}) by $\upl$. Then integrating over $V$ and using the Poincar\'{e} inequality, we have
\bna\begin{aligned}
\V\na\up'_{\l}\V_2^2&=\l\iv \me^{u_0+\up_{\l}}\le(1-\me^{b(u_0+\up_{\l})}\ri)\upl d\mu\\
&\leq \l\iv \ve\up'_{\l}\ve d\mu\leq C\l\ve V\ve^{1/2}\V\na\upl\V_2,
\end{aligned}\ena
which yields
\be\label{na}
\V\na \up'_{\l}\V_2\leq C\l.
\ee
In view of $u_0+\up_{\l}=u_0+\bar{\up}_{\l}+\up'_{\l}<0$, integrating over $V$ again, we get the upper bound of $\bar{\up}_{\l}$,
\be\label{upb}
\bar{\up}_{\l}<-\f{1}{\ve V\ve}\iv u_0(x)d\mu.
\ee
Now, we need to prove that $\bar{\up}_{\l}$ has a lower bound.
 Integrating both sides of (\ref{cs2}) over $V$, we get
 \be\label{lb}\l \iv \me^{u_0+\up_{\l}}d\mu=\l \iv \me^{(b+1)(u_0+\up_{\l})}d\mu+4\pi N>4\pi N.\ee
 Using the  H\"{o}lder inequality and the Trudinger-Moser inequality, we calculate
 \be\label{lo}
\begin{split}
\iv \me^{u_0+\up_{\l}}d\mu &= \iv \me^{u_0+\bar{\up}_{\l}+\upl}d\mu\\
&\leq \me^{\bar{\up}_{\l}}\max\limits_{x\in V}\me^{u_0}\iv \me^{\up'_{\l}}d\mu\\
&\leq C \me^{\bar{\up}_{\l}}\iv \me^{\V \na\upl\V_2\f{\up'_{\l}}{\V \na\upl\V_2}}d\mu\\
&\leq C \me^{\bar{\up}_{\l}}\iv\me^{\V \na\upl\V_2^2+\f{|\up'_{\l}|^2}{4\V \na\upl\V_2^2}}d\mu\\
&\leq  C \me^{\bar{\up}_{\l}}\me^{\V\na \upl\V_2^2}.
\end{split}
\ee
In view of  (\ref{lb}) and (\ref{lo}), we have

$$\me^{\bar{\up}_{\l}}\geq C\l^{-1}\me^{-\V \na\upl\V_2^2}$$
which together with (\ref{na}) and (\ref{upb}) yields
$$\ve \bar{\up}_{\l}\ve\leq C(1+\l+\l^2).$$
Furthermore,
$$\V \up_{\l}\V_{W^{1,2}(V)}\leq C(1+\l+\l^2).$$
\end{proof}

\begin{lemma}\label{cre}
The equation (\ref{cs2}) at $\l=\l_c$ admits a solution.

\end{lemma}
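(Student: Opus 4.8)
The plan is to obtain the solution at the critical value $\l_c$ as a limit of the maximal solutions $\up_{\l}$ for $\l > \l_c$, taking $\l \searrow \l_c$. The monotonicity established in Lemma 2.6 and the uniform bounds from Lemma 2.9 are precisely the two ingredients designed to make this limiting argument work, so the proof should assemble them.

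First I would pick a decreasing sequence $\l_n \searrow \l_c$ with each $\l_n > \l_c$, and let $\up_n := \up_{\l_n}$ denote the corresponding maximal solution of (\ref{cs2}), which exists by Lemma \ref{set}. By Lemma 2.6 the sequence is monotone, $\up_{n}<\up_{n+1}$ as $\l_n$ decreases, so $\{\up_n(x)\}$ is monotone at each fixed vertex $x$. By Lemma 2.9 we have the uniform estimate
$$\V \up_{\l_n}\V_{W^{1,2}(V)}\leq C(1+\l_n+\l_n^2)\leq C(1+\l_1+\l_1^2),$$
where the last bound uses $\l_c \le \l_n \le \l_1$. Since $V$ is finite, $W^{1,2}(V)$ is finite-dimensional and this $W^{1,2}$ bound is simply a bound on the finitely many values $\up_n(x)$, $x\in V$; hence the sequence is bounded pointwise. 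A bounded monotone sequence converges, so $\up_n(x)\to\up_{\l_c}(x)$ pointwise for some function $\up_{\l_c}$ on $V$.

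It then remains to verify that the pointwise limit $\up_{\l_c}$ actually solves (\ref{cs2}) at $\l=\l_c$. This is where the finiteness of the graph does the heavy lifting: on a finite graph $\D$ is a finite linear combination of point evaluations, so $\D\up_n(x)\to\D\up_{\l_c}(x)$ for every $x$, and the nonlinearity $t\mapsto \me^{u_0+t}(\me^{b(u_0+t)}-1)$ is continuous, so passing to the limit in
$$\D \up_n =\l_n\me^{u_0+\up_n}\le(\me^{b(u_0+\up_n)}-1\ri)+\f{4\pi N}{\ve V\ve}$$
vertex by vertex yields
$$\D \up_{\l_c} =\l_c\me^{u_0+\up_{\l_c}}\le(\me^{b(u_0+\up_{\l_c})}-1\ri)+\f{4\pi N}{\ve V\ve},$$
which is exactly (\ref{cs2}) with $\l=\l_c$. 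Thus $\up_{\l_c}$ is a solution at the critical value.

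**The main obstacle** I anticipate is not the limit passage itself—which is routine once monotonicity and the uniform bound are in hand—but rather confirming that the limit is genuinely a well-defined finite function, i.e.\ that the monotone sequence does not escape to $\pm\infty$ at any vertex. This is controlled entirely by Lemma 2.9, provided one checks carefully that its constant $C$ is uniform in $\l$ on the bounded range $[\l_c,\l_1]$, which it is, since $C$ there depends only on $\ve V\ve$ and the fixed data. A secondary point worth stating explicitly is that the upper bound (\ref{upb}), namely $\bar{\up}_{\l}<-\f{1}{\ve V\ve}\iv u_0 d\mu$, together with the lower bound on $\bar{\up}_{\l}$ extracted in Lemma 2.9, pins the averages $\bar{\up}_{\l_n}$ inside a compact interval; combined with the gradient bound (\ref{na}) this gives the full pointwise control. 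Once the limit function is shown to lie below $-u_0$ (which follows by passing to the limit in $u_0+\up_n<0$), it is itself a lower solution, consistent with the solution structure, and the proof is complete.
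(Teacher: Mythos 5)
Your proposal is correct and takes essentially the same route as the paper: both use the monotonicity of the maximal solutions in $\lambda$ (Lemma 2.6) together with the uniform $W^{1,2}(V)$ bound to extract a pointwise limit of $\upsilon_{\lambda}$ as $\lambda\searrow\lambda_c$, and then pass to the limit in the equation vertex by vertex using the finiteness of the graph and continuity of the nonlinearity. One harmless slip: since $\upsilon_{\lambda_1}>\upsilon_{\lambda_2}$ for $\lambda_1>\lambda_2>\lambda_c$, the sequence $\upsilon_{\lambda_n}$ is \emph{decreasing} (not increasing) as $\lambda_n\searrow\lambda_c$, but this does not affect the argument because the uniform bound controls the sequence from both sides.
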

\begin{proof}
Suppose $\l_c<\l<\l_c+1$. Then we see  that $\{\up_{\l}\}$ has a uniform bound in $W^{1,2}(V)$ by Lemma 2.8. The space  $W^{1,2}(V)$ is precompact since it is  finite dimensional.   Noting the monotonicity of $\{\up_{\l}\}$ with respect to $\l$, we conclude that there is a function $\up_*$ in $W^{1,2}(V)$ such that

$$\up_{\l}\ra \up_*$$
as $\l\ra \l_c$ and the convergence is pointwise. The fact $u_0+\up_{\l}<0$ implies $u_0+\up_{*}<0$

By the convergence of $\{\up_{\l}\}$, we have
$$\D \up_{\l}\ra \D\up_*$$
and
$$\l\me^{u_0+\up_{\l}}\le(\me^{b(u_0+\up_{\l})}-1\ri)\ra \l_c\me^{u_0+\up_{*}}\le(\me^{b(u_0+\up_{*})}-1\ri)$$ as $\l\ra \l_c$.
Hence  $\up_*$ is a solution of (\ref{cs2}). This completes the proof.
\end{proof}
\section{Multiple solutions}

Define the functional
\be J(\up)=\f{1}{2}\int_V\ve \na \up\ve^2d\mu+\f{\l}{b+1}\iv\me^{(b+1)(u_0+\up)}d\mu-\l\iv\me^{u_0+\up}d\mu+\f{4\pi N}{\ve V\ve}\int_V \up d\mu.\ee

Indeed, every critical point of $J$ in $W^{1,2}(V)$ serves as a solution to (\ref{cs2}). In what follows, we employ the variational method to validate the second part of Theorem 1.1. We start by demonstrating that if $\l > \l_c$, (\ref{cs2}) provides a solution, denoted as $w_{\l}$, which forms a local minimum for $J(\up )$. This leads to the emergence of multiple solutions, if the maximal solution $u_{\l}$ that we derived in Lemma \ref{set} is distinct from $w_{\l}$. In the case where $u_{\l}$ equals $w_{\l}$, we will invoke the mountain pass theorem to unearth the second solution.

Employing Lemma \ref{cre}, we ascertain a solution for Equation (\ref{cs2}) via a variational approach.
\begin{lemma}
For every $\l>\l_c$, there exists a solution of  (\ref{cs2}) in $W^{1,2}(V)$. Such a solution  is a local minimum  point of $J$.
\end{lemma}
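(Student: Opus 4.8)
The plan is to realize $\wl$ as the minimizer of $J$ over an order interval pinched between a strict lower solution and a strict upper solution of (\ref{cs2}), and then to show this minimizer sits strictly inside the interval, so that it is simultaneously a free critical point of $J$ --- hence a solution of (\ref{cs2}) --- and a local minimizer. The constrained viewpoint is forced on us because $J$ is not bounded below: taking $\up\equiv -t$ with $t\to+\infty$ kills every exponential term and sends the linear term $\frac{4\pi N}{\ve V\ve}\int_V\up\, d\mu=-4\pi N t$ to $-\infty$. Since $\l>\l_c$, I may fix $\l_1,\l_2$ with $\l_c<\l_1<\l<\l_2$; by Lemma \ref{set} both parameters are solvable, so let $\up_{\l_1},\up_{\l_2}$ be the corresponding maximal solutions of (\ref{cs2}) (Lemma 2.2). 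Because $u_0+\up_{\l_i}<0$, the quantity $\me^{u_0+\up_{\l_i}}(\me^{b(u_0+\up_{\l_i})}-1)$ is negative; increasing the coefficient from $\l_1$ to $\l$ therefore makes $\up_{\l_1}$ a strict lower solution, decreasing it from $\l_2$ to $\l$ makes $\up_{\l_2}$ a strict upper solution of the $\l$-problem, and Lemma 2.5 gives $\up_{\l_1}<\up_{\l_2}$ on $V$.

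Next I would set $\mathcal{M}=\{\up\in W^{1,2}(V):\up_{\l_1}\leq\up\leq\up_{\l_2}\ \text{on}\ V\}$. As $V$ is finite, $W^{1,2}(V)$ is finite dimensional and $\mathcal{M}$ is a closed, bounded, convex box, hence compact; the continuous functional $J$ therefore attains its minimum on $\mathcal{M}$ at some $\wl$. The entire difficulty is then to upgrade this constrained minimizer by proving the strict separation $\up_{\l_1}<\wl<\up_{\l_2}$ at every vertex.

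For this I would combine the coordinatewise Karush--Kuhn--Tucker conditions for the box constraint with the graph maximum principle (Lemma 2.1). Writing $g(\up)=-\D\up+\l\me^{u_0+\up}(\me^{b(u_0+\up)}-1)+\frac{4\pi N}{\ve V\ve}$ for the $L^2(\mu)$-gradient of $J$, one has $\frac{\partial J}{\partial\up(x)}=\mu(x)\,g(\up)(x)$, so minimality of $\wl$ forces $g(\wl)(x)\geq 0$ wherever $\wl(x)=\up_{\l_1}(x)$ and $g(\wl)(x)\leq 0$ wherever $\wl(x)=\up_{\l_2}(x)$. Suppose $\phi:=\wl-\up_{\l_1}\geq 0$ vanished at some $x_0$. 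Then $g(\wl)(x_0)\geq 0$ together with $\wl(x_0)=\up_{\l_1}(x_0)$ yields $\D\wl(x_0)\leq\l\me^{u_0+\up_{\l_1}}(\me^{b(u_0+\up_{\l_1})}-1)(x_0)+\frac{4\pi N}{\ve V\ve}<\D\up_{\l_1}(x_0)$, the last step being the strict lower-solution property; hence $\D\phi(x_0)<0$. This contradicts $\D\phi(x_0)=\frac{1}{\mu(x_0)}\sum_{y\sim x_0}\o_{x_0y}\phi(y)\geq 0$, valid because $x_0$ is a minimum point of the nonnegative function $\phi$. The symmetric computation with $\psi:=\up_{\l_2}-\wl$ and the strict upper-solution property excludes contact with $\up_{\l_2}$, so the strict separation holds.

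Once $\up_{\l_1}<\wl<\up_{\l_2}$ is known, every coordinate of $\wl$ is interior, the KKT inequalities collapse to $g(\wl)\equiv 0$, and $\wl$ solves (\ref{cs2}). For the local-minimality claim, finiteness of $V$ turns the strict inequalities into a uniform gap $\up_{\l_1}+\ep\leq\wl\leq\up_{\l_2}-\ep$ for some $\ep>0$; since all norms on the finite-dimensional space $W^{1,2}(V)$ are equivalent, a small ball $\{\up:\V\up-\wl\V_{W^{1,2}(V)}<\delta\}$ is contained in $\mathcal{M}$, on which $\wl$ is the global minimizer, so $\wl$ is a local minimum of $J$. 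I expect the strict-separation step to be the crux: it is precisely what converts a boundary-constrained minimizer into an interior critical point and local minimizer, whereas the compactness and continuity arguments are routine in the finite-dimensional setting.
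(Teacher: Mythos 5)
Your proposal is correct, and it takes a genuinely different route from the paper. The paper minimizes $J$ over the one-sided set $\Sigma=\{\up\geq \up_*\}$, where $\up_*$ is the solution at the critical parameter $\l_c$ (so it leans on Lemma \ref{cre}, and hence on the a priori estimates of Lemma 2.8); since $\Sigma$ is unbounded, it must first show $J$ is bounded below there and that a minimizing sequence stays bounded, and then --- this is the heart of the paper's proof --- it runs Tarantello's one-sided variational-inequality argument with the perturbation $w_t=\max\{w_{\l}+t\va,\up_*\}$ and Taylor expansions to show the constrained minimizer is a free critical point, before finally proving $w_{\l}>\up_*$ and deducing local minimality by contradiction. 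You instead trap the minimizer in the compact box between the maximal solutions $\up_{\l_1}<\up_{\l_2}$ at two flanking parameters $\l_c<\l_1<\l<\l_2$ (needing only Lemma \ref{set} and the monotonicity Lemma 2.5, not the critical-case existence), so existence of the minimizer is immediate, and you replace the Tarantello machinery by coordinatewise first-order (KKT) conditions combined with the graph maximum principle to force strict interiority; criticality and local minimality then fall out at once from the uniform gap and norm equivalence. Your computation of the coordinate gradient $\partial J/\partial\up(x)=\mu(x)\bigl(-\D\up(x)+\l\me^{u_0+\up}(\me^{b(u_0+\up)}-1)(x)+\tfrac{4\pi N}{\ve V\ve}\bigr)$ and the sign bookkeeping in the contact argument are correct, as is the strictness of the lower/upper solution inequalities (which rests on $u_0+\up_{\l_i}<0$, established in Section 2). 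The trade-off: your argument is shorter, more elementary, and decouples this lemma from Lemma \ref{cre}, but it exploits finite dimensionality essentially (point evaluations as coordinates, compactness of the box), whereas the paper's Tarantello-style argument is the one that survives in settings --- infinite graphs or continuum domains --- where coordinatewise perturbations and compactness are unavailable.
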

\begin{proof}
For every $\l>\l_c$, define
\be\label{dsb}
\Sigma=\{\up\in W^{1,2}(V)|\,\up\geq \up_*\,\, \text{in}\,\,V\},
\ee where $\up_*$ is  the solution of (\ref{cs2}) at $\l=\l_c$.
By the Young inequality, we have
$$\me^{u_0+\up}\leq \f{\me^{(b+1)(u_0+\up)}}{b+1}+\f{b}{b+1}.$$
Hence,
\be\label{jes}J(\up)\geq \f{1}{2}\int_V\ve \na \up\ve^2d\mu-\f{b\l}{b+1}\ve V\ve+\f{4\pi N}{\ve V\ve}\int_V \up d\mu.\ee
We see that  $J$ has a lower bound on $\Sigma$.

Denote
\be\label{inf}
\eta_0=\inf\{J(\up)|\up\in \Sigma\}.
\ee
Next, we prove that there exists $ w_{\l}\in \Sigma$ such that $J(w_{\l})=\eta_0$.

There is  a sequence $\{\up_n\}$   in $\Sigma$ such that
\be\label{jbd}J(\up_n)\rightarrow \eta_0.\ee
As in Lemma 2.8,  we write   $\up_n$ as $\up_n=\up_n'+\bar{\up}_n$,  where $\bar{\up}_{n}=\f{1}{\ve V\ve}\int_{V}\up_{n} d\mu$ and $\up'_{n}=\up_{n}-\bar{\up}_{n}$.
By (\ref{jbd}), we know that $J(\up_n)$ is bounded.
In view of  (\ref{jes}), we get
$$J(\up_n)+\f{b\l}{b+1}\ve V\ve\geq 4\pi N \bar{\up}_n,$$
which together with the fact $\up_n\geq \up_*$ gives the bound of $\bar{\up}_n$. 

Let's define $c_*=\inf\{\up_*(x)|x\in V\}$. Observing that the integral $\int_{V}(\up_n-c_*)d\mu$ is consistently bounded and realizing that $\up_n-c_*$ is always greater than or equal to zero, it follows that an upper bound exists for $\up_n$. This allows us to declare $\up_n$ as a bounded function. In addition, we can ascertain that the sequence ${\up_n}$ is indeed bounded in the functional space $W^{1,2}(V)$. 
 
 Since
$W^{1,2}(V)$
is precompact, we obtain
\be\label{lim}\up_n\ra w_{\l}\ee
as $n\ra \infty$ and $w_{\l}$ is a solution to problem (\ref{inf}). 

We use the same method as  in \cite{GT96} to prove that $w_{\l}$ is a solution of (\ref{cs2}).
 Define
$$w_t=\max\{w_{\l}+t\varphi, \up_*\}\in \Sigma,$$ where $\varphi\in W^{1,2}(V)$ and $t\in (0,1)$.
Letting $u_t=\max\{\up_*-(w_{\l}+t\varphi), 0\}$,  we write
$$w_t=w_{\l}+t\va+u_t.$$
Calculate
\bna
\begin{aligned}
0&\leq \f{1}{t}\le(J(w_t)-J(\wl)\ri)\\
&=\f{1}{2t}\int_V\le(\ve \na w_t\ve^2-\ve \na\wl\ve^2\ri)d\mu+\f{\l}{t(b+1)}\iv\le[\me^{(b+1)(u_0+w_t)}-\me^{(b+1)(u_0+\wl)}\ri]d\mu\\
&-\f{\l}{t}\iv\le(\me^{u_0+w_t}-\me^{u_0+\wl}\ri)d\mu+\f{4\pi N}{\ve V\ve t}\int_V \le(w_t-\wl\ri)d\mu \\
&=\f{1}{2t}\int_V\ve \na(t\va+u_t)\ve^2d\mu+\f{1}{t}\int_V \na \wl\cdot\na (t\va+u_t)d\mu+\f{\l}{t}\int_V \me^{u_0+\wl}\le(\me^{b(u_0+\wl)}-1\ri)t\va d\mu\\
&+\f{\l}{t(b+1)}\iv\le[\me^{(b+1)(u_0+w_t)}-\me^{(b+1)(u_0+\wl)}-(b+1)\me^{(b+1)(u_0+\wl)}t\va\ri]d\mu\\
&-\f{\l}{t}\iv\le(\me^{u_0+w_t}-\me^{u_0+\wl}-\me^{u_0+\wl}t\va\ri)d\mu+\f{4\pi N}{\ve V\ve}\le(\int_V \va d\mu+\f{1}{t}\int_V u_t d\mu\ri).
\end{aligned}
\ena
As a result,
\begin{align*}
\int_V &\na \wl\cdot\na \va d\mu+\l\int_V \me^{u_0+\wl}\le(\me^{b(u_0+\wl)}-1\ri)\va d\mu+\f{4\pi N}{\ve V\ve}\int_V \va d\mu\\
&\geq -\f{t}{2}\V \na \va\V^2_2-\int_V \na \va\cdot\na u_t d\mu-\f{1}{2t}\V \na u_t\V^2_2-\f{1}{t}\int_V \na \wl\cdot\na u_t d\mu\\
&-\f{\l}{t(b+1)}\iv\le[\me^{(b+1)(u_0+w_t)}-\me^{(b+1)(u_0+\wl)}-(b+1)\me^{(b+1)(u_0+\wl)}t\va\ri]d\mu\\
&+\f{\l}{t}\iv\le(\me^{u_0+w_t}-\me^{u_0+\wl}-\me^{u_0+\wl}t\va\ri)d\mu-\f{4\pi N}{\ve V\ve t}\int_V u_t d\mu.\\
&=O(t)+\f{1}{t}\int_V\na (-t\va-\wl+\up_*)\cdot\na u_t d\mu-
\f{1}{2t}\V \na u_t\V^2_2-\f{1}{t}\int_V \na \up_*\cdot \na u_t d\mu\\
&-\f{\l}{t}\int_{V} \me^{u_0+\up_*}\le(\me^{b(u_0+\up_*)}-1\ri)u_t d\mu-\f{4\pi N}{\ve V\ve t}\int_V u_t d\mu\\
&-\f{\l}{t(b+1)}\iv\le[\me^{(b+1)(u_0+w_t)}-\me^{(b+1)(u_0+\wl)}-(b+1)\me^{(b+1)(u_0+\wl)}(t\va +u_t)\ri]d\mu\\
&+\f{\l}{t}\iv\le[\me^{u_0+w_t}-\me^{u_0+\wl}-\me^{u_0+\wl}(t\va+u_t)\ri]d\mu.\\
&+\f{\l}{t}\iv\le[\me^{(b+1)(u_0+\up_*)}-\me^{(b+1)(u_0+\wl)}\ri]u_td\mu-\f{\l}{t}\iv \le(\me^{u_0+\up_*}-\me^{u_0+\wl}\ri)u_td\mu.
\end{align*}
Noting that $\up_*$ is the lower solution of (\ref{cs2}) if $\l>\l_c$, we obtain
\be
-\f{1}{t}\int_V \na \up_*\cdot \na u_td\mu-\f{\l}{t}\int_{V} \me^{u_0+\up_*}(\me^{b(u_0+\up_*)}-1)u_td\mu-\f{4\pi N}{\ve V\ve t}\int_V u_td\mu\geq 0.
\ee
Since
$$ \int_V \na(-t\va-\wl+\up_*)\cdot\na u_t\geq \V \na u_t\V^2_2,
$$ we have
\be
\f{1}{t}\int_V\na (-t\va-\wl+\up_*)\cdot\na u_t-
\f{1}{2t}\V \na u_t\V^2_2\geq 0.
\ee

We now put forth the proposition:
\begin{equation}\label{eul}
	|t\va+u_t|\leq t|\va|.
\end{equation}

If we consider the case where $u_t=0$, it directly results in
\begin{equation*}
	|t\va+u_t|\leq t|\va|.
\end{equation*}

In the instance where $u_t>0$, we find that both
\begin{equation*}
	w_{\l}+t\va<\up_*
\end{equation*}
and
\begin{equation*}
	t\va+u_t=t\va+\up_*-(w_{\l}+t\va)=\up_*-w_{\l}\leq 0.
\end{equation*}
are satisfied.

From these conditions, it follows that
\begin{equation*}
	|t\va+u_t|\leq w_{\l}-\up_*< -t\va=t|\va|.
\end{equation*}

Therefore, in all cases, it holds that
\begin{equation*}
	|t\va+u_t|\leq t|\va|.
\end{equation*}

It follows that
\be
\f{\l}{t(b+1)}\iv\le[\me^{(b+1)(u_0+w_t)}-\me^{(b+1)(u_0+\wl)}-(b+1)\me^{(b+1)(u_0+\wl)}(t\va +u_t)\ri]d\mu=O(t)
\ee
and \be \f{\l}{t}\iv\le[\me^{u_0+w_t}-\me^{u_0+\wl}-\me^{u_0+\wl}(t\va+u_t)\ri]d\mu=O(t)\ee
by the Taylor expansion.

Define
$$V_t=\{x\in V:\wl(x)+t\va(x)-\up_*(x)<0\}$$ and $$ V_0=\{x\in V:\wl(x)=\up_*(x)\}.$$
Noting (\ref{eul}) implies $|u_t|\leq 2t|\va|$, we have
\be\label{lel}
\begin{split}
&\le|\f{\l}{t}\iv\le[\me^{(b+1)(u_0+\up_*)}-\me^{(b+1)(u_0+\wl)}\ri]u_td\mu-\f{\l}{t}\iv \le(\me^{u_0+\up_*}-\me^{u_0+\wl}\ri)u_td\mu\ri|\\
&\leq \f{\l}{t}\int_{V_t\backslash V_0}2\le|\me^{(b+1)(u_0+\up_*)}-\me^{(b+1)(u_0+\wl)}-\me^{u_0+\up_*}+\me^{u_0+\wl}\ri||t\va|\\
&\leq C\int_{V_t\backslash V_0}|\va|\ra 0,
\end{split}
\ee
as $t\ra 0^+$.

Therefore, by the above inequalities, we get
\be\label{cri}
\int_V \na \wl\cdot\na \va+\l\int_V \me^{u_0+\wl}\le(\me^{b(u_0+\wl)}-1\ri)\va+\f{4\pi N}{\ve V\ve}\int_V \va\geq 0,
\ee
which implies  that for any $\va \in W^{1,2}(V)$, $\langle J'(\wl), \va\rangle\geq 0$.  The reverse inequality  holds if we  replacing $\va$ with $-\va$.  Hence,  $\wl$ is a critical point of $J$ in $W^{1,2}(V)$. We see that $\wl$ is also a solution of Eq.(\ref{cs2}).  

Noting (\ref{lim}) and $\{\up_n\}\subset \Sigma$, we have $\wl\geq \up_*$. Then we obtain
\be\label{moc}\begin{split}
\D (\up_*-\wl)&=\l_c\me^{u_0+\up_*}\le(\me^{b(u_0+\up_*)}-1\ri)-\l\me^{u_0+\wl}\le(\me^{b(u_0+\wl}-1\ri)\\
&=\l\me^{u_0+\up_*}\le(\me^{b(u_0+\up_*)}-1\ri)-\l\me^{u_0+\wl}\le(\me^{b(u_0+\wl)}-1\ri)\\
&\,\,\,\,\,+(\l_c-\l)\me^{u_0+\up_*}\le(\me^{b(u_0+\up_*)}-1\ri)\\
&>\l\le[\me^{u_0+\up_*}\le(\me^{b(u_0+\up_*)}-1\ri)-\me^{u_0+\wl}\le(\me^{b(u_0+\wl)}-1\ri)\ri] \\
&=\l\me^{\xi}\le[(b+1)\me^{b\xi}-1\ri]\le(\up_*-\wl\ri)\\
&\geq b\l\le(\up_*-\wl\ri),
\end{split}
\ee
where $u_0+\up_*\leq \xi\leq u_0+\wl$. If $\max\limits_{x\in V}(\up_*-\wl)(x)=(\up_*-\wl)(x_0)=0$  for some $x_0\in V$, then  we have
 $$\D(\up_*-\wl)(x_0)>0,$$ which is impossible.  Hence,  $\wl(x)>\up_*(x)$ for all $x\in V$.

Now we prove  that $w_{\l}$  a local minimum  point of the functional $J$ in $W^{1,2}(V)$.
 We argue by contradiction. Suppose that $w_{\l}$  is not a local minimum  point of  $J$. Then for any $n\geq 1$, we have

\be\label{inf2}
\inf\limits_{\V \up-w_{\l}\V_{W^{1,2}(V)}\leq \f{1}{n}}J(\up)<J(w_{\l}).
\ee
Similar to  problem (\ref{inf}),  the infimum of $J$ can be  achieved at some point $\up_n'\in \{\up:\V \up-w_{\l}\V_{W^{1,2}(V)}\leq \f{1}{n}\}$. Then we have
\be
\up_n'\ra w_{\l}
\ee
as $n\ra \infty$. Noting $w_{\l}>\up_*$, we see that  $\up_n'>\up_*$ and $\up_n'\in \Sigma$ for $n$ sufficiently large. Hence,
$$J(\up_n')\geq J(w_{\l}),$$  which  contradicts (\ref{inf2}).  We finish the proof.
\end{proof}

 Our hypothesis postulates that the maximal solution, denoted by $\up_{\l}$, acts as the local minimum point of the functional $J$. Our subsequent efforts will focus on demonstrating that the functional $J$ fulfills the Palais-Smale condition, using the line of argumentation presented in reference \cite{LZ}.
 
\begin{lemma}
Any sequence $\{\up_n\}\subset W^{1,2}(V)$ satisfiing

(1) $J(\up_n)\ra c$ as $n\ra \infty$,

(2) $\V J'(\up_n)\V\ra 0$ as $n\ra \infty$,

admits a convergent subsequence.

\end{lemma}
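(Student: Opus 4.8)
The goal is to establish the Palais--Smale condition for $J$ on the finite graph $G=(V,E)$. Since $W^{1,2}(V)$ is finite dimensional, the entire difficulty reduces to proving that any Palais--Smale sequence $\{\up_n\}$ is bounded in $W^{1,2}(V)$; once boundedness is known, precompactness of finite-dimensional spaces immediately yields a convergent subsequence, and the continuity of $J$ and $J'$ guarantees that the limit is a critical point at level $c$. So the plan is to extract a uniform bound on $\V\up_n\V_{W^{1,2}(V)}$ from conditions (1) and (2).

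The key structural step is to combine the energy bound $J(\up_n)\to c$ with the derivative bound $\V J'(\up_n)\V\to 0$ in the form $\langle J'(\up_n),\up_n\rangle = o(1)\V\up_n\V_{W^{1,2}(V)}$. First I would write both $J(\up_n)$ and $\langle J'(\up_n),\up_n\rangle$ explicitly, noting that
\[
\langle J'(\up),\up\rangle=\int_V\ve\na\up\ve^2 d\mu+\l\iv\me^{(b+1)(u_0+\up)}\up\, d\mu-\l\iv\me^{u_0+\up}\up\, d\mu+\f{4\pi N}{\ve V\ve}\int_V\up\, d\mu.
\]
As in Lemma 2.8, I would split $\up_n=\bar{\up}_n+\up'_n$ with $\bar{\up}_n=\f{1}{\ve V\ve}\int_V\up_n\, d\mu$ and $\int_V\up'_n\, d\mu=0$, and control the two pieces separately. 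The Young inequality estimate $\me^{u_0+\up}\leq\f{1}{b+1}\me^{(b+1)(u_0+\up)}+\f{b}{b+1}$ already used in (\ref{jes}) gives the lower bound $J(\up_n)\geq\f12\V\na\up'_n\V_2^2-\f{b\l}{b+1}\ve V\ve+4\pi N\,\bar{\up}_n$, so the boundedness of $J(\up_n)$ forces a gradient bound on $\up'_n$ together with an \emph{upper} bound on $\bar{\up}_n$. The Poincar\'e inequality then turns the gradient bound into a full $W^{1,2}(V)$ bound on the mean-zero part $\up'_n$.

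The main obstacle, and the place where the Palais--Smale derivative condition is genuinely needed, is obtaining a \emph{lower} bound on the mean $\bar{\up}_n$; the energy alone cannot prevent $\bar{\up}_n\to-\infty$. Here I would use condition (2): testing $J'(\up_n)$ against the constant function $1$ gives $\f{1}{\ve V\ve}\langle J'(\up_n),1\rangle=\l\iv\me^{(b+1)(u_0+\up_n)}d\mu-\l\iv\me^{u_0+\up_n}d\mu+4\pi N/\ve V\ve$ up to $o(1)$ factors, which is essentially the integrated form of (\ref{cs2}) and reproduces the identity $\l\iv\me^{u_0+\up_n}d\mu=\l\iv\me^{(b+1)(u_0+\up_n)}d\mu+4\pi N+o(1)$. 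Combining this with the Trudinger--Moser estimate (Lemma \ref{lm3}) exactly as in the chain (\ref{lo}), namely $\iv\me^{u_0+\up_n}d\mu\leq C\me^{\bar{\up}_n}\me^{\V\na\up'_n\V_2^2}$, and using that the left side stays bounded away from zero, yields $\me^{\bar{\up}_n}\geq C\me^{-\V\na\up'_n\V_2^2}$, hence a lower bound on $\bar{\up}_n$ in terms of the already-controlled quantity $\V\na\up'_n\V_2$.

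Putting the upper bound from the energy and the lower bound from the Trudinger--Moser argument together bounds $\ve\bar{\up}_n\ve$, and combined with the $W^{1,2}(V)$ bound on $\up'_n$ this gives $\V\up_n\V_{W^{1,2}(V)}\leq C$. Finally, invoking precompactness of $W^{1,2}(V)$ extracts a subsequence $\up_n\to\up$ in $W^{1,2}(V)$; passing to the limit in $J'(\up_n)\to 0$ shows $J'(\up)=0$, so $\up$ is the desired limit. I would remark that the only subtlety requiring care is making the $o(1)$ terms coming from $\langle J'(\up_n),\cdot\rangle$ quantitative against the norms $\V 1\V_{W^{1,2}(V)}$ and $\V\up_n\V_{W^{1,2}(V)}$, but since these test functions have controlled norms the estimates close in the standard way.
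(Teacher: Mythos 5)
Your plan is genuinely different from the paper's, but as written it contains a circular step that cannot be closed with the estimates you invoke. From the energy lower bound (\ref{jes}) you get only
$\frac{1}{2}\Vert\na\up'_n\Vert_2^2+4\pi N\,\bar{\up}_n\leq C$;
since the gradient term is nonnegative this does give the upper bound on $\bar{\up}_n$, but it does \emph{not} give a gradient bound unless $\bar{\up}_n$ is already known to be bounded below. Your lower bound on $\bar{\up}_n$, in turn, comes from the Trudinger--Moser chain ``exactly as in (\ref{lo})'' and reads $\bar{\up}_n\geq -C-\Vert\na\up'_n\Vert_2^2$, i.e.\ it is a bound in terms of the very quantity you called ``already-controlled''---which it is not. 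If one tries to break the circle by substituting this lower bound into the energy inequality, one obtains
$\left(\frac{1}{2}-4\pi N\right)\Vert\na\up'_n\Vert_2^2\leq C$,
which is vacuous because $4\pi N>\frac{1}{2}$. Note also that the analogous step in Lemma 2.8 is not available here: there the gradient bound $\Vert\na\up'_{\l}\Vert_2\leq C\l$ is obtained \emph{first}, by multiplying the equation by $\up'_{\l}$ and using the pointwise bound $0\leq\me^{u_0+\up_{\l}}(1-\me^{b(u_0+\up_{\l})})\leq 1$, which relies on the sign condition $u_0+\up_{\l}<0$ satisfied by maximal solutions. A Palais--Smale sequence carries no such sign information, so the nonlinear term is not pointwise bounded and that argument does not transfer.

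Your route can be repaired, but it needs a sharper Trudinger--Moser step that you did not supply: writing $\up'_n\leq\epsilon\Vert\na\up'_n\Vert_2^2+\frac{|\up'_n|^2}{4\epsilon\Vert\na\up'_n\Vert_2^2}$ and applying Lemma \ref{lm3} with $\al=\frac{1}{4\epsilon}$ gives $\iv\me^{\up'_n}d\mu\leq C_\epsilon\me^{\epsilon\Vert\na\up'_n\Vert_2^2}$, hence $\bar{\up}_n\geq -C_\epsilon-\epsilon\Vert\na\up'_n\Vert_2^2$; choosing $\epsilon<\frac{1}{8\pi N}$ makes the combined coefficient $\frac{1}{2}-4\pi N\epsilon$ positive, and then the gradient bound, the two-sided bound on $\bar{\up}_n$, and the full $W^{1,2}(V)$ bound all follow. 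The paper avoids this machinery entirely: it tests $J'(\up_n)$ against $\va=-1$ to pin $\int_V\me^{u_0+\up_n}(1-\me^{b(u_0+\up_n)})d\mu$ between two positive constants, deduces from this that some vertex $\bar{x}$ carries a bounded subsequence, and then uses the energy bound to show that differences across edges are $O(\sqrt{\Lambda_k})$ where $\Lambda_k=\max_V|\up_{n_k}|$, so that a finite path from $\bar{x}$ to the maximum point forces $\up_{n_k}(\bar{x})\geq\Lambda_k-C\sqrt{\Lambda_k}\ra+\infty$, a contradiction. That argument is more elementary (no Poincar\'e or Trudinger--Moser input) and exploits the graph structure directly, whereas your variational decomposition, once fixed with the $\epsilon$-refinement, would generalize more readily to settings where such path arguments are unavailable.
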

\begin{proof}
We have
\be\label{mp1}
\f{1}{2}\int_V\ve \na \up_n\ve^2d\mu+\f{\l}{b+1}\iv\me^{(b+1)(u_0+\up_n)}d\mu-\l\iv\me^{u_0+\up_n}d\mu+\f{4\pi N}{\ve V\ve}\int_V \up_n d\mu=c+o_n(1),
\ee
\be\label{mp2}
\le|\int_V\na \up_{n}\cdot\na\varphi+\l\int_V \me^{u_0+\up_n}\le(\me^{b(u_0+\up_n)}-1\ri)\varphi+\f{4\pi N}{\ve V\ve}\int_V\varphi\ri|\leq \epsilon_n\V\varphi\V_{W^{1,2}(V)}, \,\epsilon_n\ra 0,
\ee
as $n\ra \infty$. By taking $\varphi=-1$ in (\ref{mp2}), we conclude that there exists an positive integer $N$ such that if $n\geq N$,
\be\label{mp3}
0<c\leq \int_V \me^{u_0+\up_n}\le(1-\me^{b(u_0+\up_n)}\ri)\leq C,
\ee
 where $c$ and $C$ are constants depending only on $\l$.  Now  we  prove  that $\up_n(x)$ is bounded for any fixed $x\in V$ by contradiction. Suppose not, then we conclude that there exists $x'$ and $\{\up_{n_k}\}\subset\{\up_n\}$ satisfying
$\up_{n_k}(x')\ra +\infty\,(-\infty),$ as $k\ra +\infty$.
Without loss of generality, we assume that  $\up_{n_k}(x')$ tends to $ +\infty$ as $k\ra +\infty$.

From now on, we don't distinguish the sequence and the subsequence. We prove that there is a point $\bar{x}\in V$ such that $\{\up_{n_k}(\bar{x})\}$ is bounded. Supposing not, we obtain that for any $y\in V$,
$\up_{n_k}(y)$ tends to $ +\infty\,(-\infty),$
as $k\ra +\infty$,   which contradicts with (\ref{mp3}).

Noting $\up_{n_k}(x')\ra +\infty$, assume that

$$\Lambda_k=\max\limits_{x\in V}| \up_{n_k}(x)|=\up_{n_k}(x_k).$$ Then we get $\up_{n_k}(x_k)\ra +\infty $ as $k\ra \infty$.
In view of (\ref{mp1}), we have
\be\f{1}{2}\int_V\ve \na \up_{n_k}\ve^2d\mu+\f{\l}{b+1}\iv\me^{(b+1)(u_0+\up_{n_k})}d\mu-\l\iv\me^{u_0+\up_{n_k}}d\mu+\f{4\pi N}{\ve V\ve}\int_V \up_{n_k} d\mu=c+o_n(1).\ee
Noting (\ref{jes}) and denoting $M=\f{b\l}{b+1}\ve V\ve$, we obtain
\be c\geq \f{1}{2}\sum\limits_{x\in V}\sum\limits_{y\sim x}\omega_{xy}|\up_{n_k}(y)-\up_{n_k}(x)|^2-M-4\pi N \Lambda_k+o_n(1).\ee
Consequently,
\be\label{me1}
|\up_{n_k}(x)-\up_{n_k}(y)|\leq C\sqrt{c+M+4\pi N\Lambda_k} +o_n(1),
\ee
if  $x\sim y$.

Noting $G$ is finite and connected, we can find  a path connecting $\bar{x}$ and $x_k$, namely,
 \be\label{me2}\bar{x}=x_{1}\sim x_2\sim x_3\sim\cdot\cdot\cdot\sim x_{l}=x_k.\ee
Combining (\ref{me1}) and (\ref{me2}), we obtain
\be\label{me3}
\begin{split}
\up_{n_k}(\bar{x})&\geq  \up_{n_k}(x_k)-(l-1)\sqrt{c+M+4\pi N\Lambda_k}+o_n(1)\\
&=\Lambda_k-(l-1)\sqrt{c+M+4\pi N\Lambda_k}+o_n(1)\\
&\ra +\infty,\,\,\text{as}\,k\ra +\infty.
\end{split}
\ee
Since we have proved that $\{\up_{n_k}(\bar{x})\}$ is bounded, there is a contradiction. Hence,  $\up_n(x)$ is bounded for any fixed $x\in V$. Furthermore,  $\{\up_{n_k}(x)\}$ is bounded in $W^{1,2}(V)$ and there exists a function $\hat{\up}$ such that
$$\up_{n_k}(x)\ra \hat{\up}(x),$$  for all $x\in V$,  as $k\ra +\infty$. Then proof is completed.
\end{proof}
Next we use the mountain pass theorem to get the second solution  of (\ref{cs2}). Since  $\up_{\l}$ is  a  local minimum point for $J$ by the assumption, there exists a constant $r_0>0$ such that
\be\label{ine}
\inf\limits_{\V \up-\up_{\l}\V_{W^{1,2}(V)}=r_0} J(\up)>J(\up_{\l}).
\ee

For any $\tau>0$, we have
\be\begin{split}
J(\up_{\l}-\tau)-J(\up_{\l})&=\f{\l}{b+1}\iv\le[\me^{(b+1)(u_0+\up_{\l}-\tau)}-\me^{(b+1)(u_0+\up_{\l})}\ri]d\mu\\
&-\l\iv\le(\me^{u_0+\up_{\l}-\tau}-\me^{u_0+\up_{\l}}\ri)d\mu-4\pi N\tau\\
&\ra -\infty
\end{split}
\ee
as $\tau\ra +\infty.$  Hence, there exists a  constant $\tau_0$ such that $\V \up_{\l}-\tau_0\V_{W^{1,2}(V)}>r_0$ and
\be
J(\up_{\l}-\tau_0)<J(\up_{\l})-1<J(\up_{\l}).
\ee
Define  $$\Gamma=\{\gamma:[0,1]\ra W^{1,2}(V) \,\,\text{continuous} \,\,\gamma(0)=\up_{\l},\,\gamma(1)=\up_{\l}-\tau_0\}$$ and
\be
c_0=\inf\limits_{\gamma\in\Gamma }\sup\limits_{t\in[0,1]}J(\gamma(t)).
\ee
By (\ref{ine}),  we get $c_0>J(\up_{\l})$. Then the mountain pass theorem \cite{AR73} implies that  $c_0$ is a critical value of $J$. We obtain    the second solution of (\ref{cs2}),  which is  the critical point of $J$.

\vskip 30 pt
\noindent{\bf ACKNOWLEDGMENTS}

This work is  partially  supported by the National Natural Science Foundation of China (Grant No. 11721101), and by National Key Research and Development Project SQ2020YFA070080.
\vskip 8pt
\noindent{\bf AUTHOR DECLARATIONS}
\vskip 8 pt
\noindent{\bf Conflict of Interest}

The authors have no conflicts to disclose.
\vskip 8 pt
\noindent{\bf Author Contributions}

 {\bf Jia Gao}: Investigation(equal);  Writing- original draft (equal). {\bf Songbo Hou}: Methodology(equal);  Supervision(equal);  Writing-review \& editing (equal).
\vskip 8 pt
\noindent{\bf DATA AVAILABILITY}

There are no new data which are created or analyzed in this study.


\begin{thebibliography}{10}



\bibitem{AR73} A. Ambrosetti, P. Rabinowitz, Dual variational methods in critical point theory and applications,
J. Funct. Anal. 14 (1973) 349-381.

\bibitem{CY95} L.A. Caffarelli,  Y. Yang,  Vortex condensation in the Chern-Simons Higgs model: an existence theorem,  Commun. Math. Phys. 168 (1995)  321-336.

\bibitem{CH} Ruixue Chao, Songbo Hou, Multiple solutions for a generalized Chern-Simons equation on graphs, J. Math. Anal. Appl. 519(1)  2023   Paper No. 126787 16pp.

\bibitem{CHS} Ruixue Chao, Songbo Hou, Jiamin Sun, Existence of solutions to  a generalized  self-dual Chern-Simons system on finite graphs, arXiv:2206.12863.

\bibitem{SH13} Shouxin Chen, Xiaosen Han,  An existence theorem for a generalized self-dual Chern-Simons equation
and its application, Z. Angew. Math. Phys. 64(5) (2013) 1555-1570.
 \bibitem{Ch07}  K. Choe,  Asymptotic behavior of condensate solutions in the Chern-Simons-Higgs theory, J. Math.
Phys. 48(10) (2007) 103501 17pp.
 \bibitem{CKL}K. Choe, N. Kim, C.S. Lin,  Existence of self-dual non-topological solutions in the Chern-Simons Higgs model, Ann.
Inst. Henri Poincar\'{e} Anal. Non Lin\'{e}aire 28 (2011) 837-852.

\bibitem{CC99} H.R. Christiansen,  M.S. Cunha, J.A. Helay\"{e}l-Neto, L.R.U.  Manssur, A.L.M.A. Nogueira,  Self-dual vortices in a
Maxwell-Chern-Simons model with non-minimal coupling,  Int. J. Mod. Phys. A,  14 (1999) 1721-1735.

\bibitem{Ge20} Huabin Ge, The $p$th Kazdan-Warner equation on graphs,  Commun. Contemp. Math. 22(6) (2020) 1950052 17 pp.

\bibitem{GJ18} Huabin Ge, Wenfeng Jiang, Kazdan-Warner equation on infinite graphs, J. Korean Math. Soc. 55(5) (2018)  1091-1101.
\bibitem{GJ19} Huabin Ge, Wenfeng Jiang,  The 1-Yamabe equation on graphs,  Commun. Contemp. Math. 21(8) (2019)  1850040 10 pp.
\bibitem{GJJ} Huabin Ge, Wenfeng Jiang, Yamabe equations on infinite graphs, J. Math. Anal. Appl. 460(2) (2018) 885-890.
\bibitem{GLY}  Alexander Grigor'yan, Yong Lin,  Yunyan Yang, Kazdan-Warner equation on graph, Calc. Var. Partial Dif. 55(4) (2016) Art. 92 13 pp.

\bibitem{GLY16} Alexander Grigor'yan, Yong Lin,  Yunyan Yang, Yamabe type equations on graphs, J. Differential Equations 261 (2016) 4924-4943.



\bibitem{HSZ} Xiaoli Han, Mengqiu Shao, Liang Zhao, Existence and convergence of solutions for nonlinear biharmonic equations on graphs, J. Differential Equations 268 (2020) 3936-3961.



 \bibitem{Hou21} Songbo Hou, Multiple solutions of a nonlinear biharmonic equation on graphs, Communications in Mathematics and Statistics, https://doi.org/10.1007/s40304-021-00273-4.
 \bibitem{HS} Songbo Hou, Jiamin Sun, Existence of solutions to Chern-Simons-Higgs equations on
graphs, Calc. Var. Partial Dif. 61(4) (2022), Art.139 13pp.

\bibitem{HLY} An Huang, Yong Lin , Shing-Tung Yau,  Existence of solutions to mean field equations on graphs,  Comm. Math. Phys. 377(1) (2020)  613-621.
\bibitem{HWY} Hsin-Yuan Huang, Jun Wang, Wen Yang,   Mean field equation and relativistic Abelian
Chern-Simons model on finite graphs, Journal of Functional Analysis 281(10) (2021) Paper No. 109218 36 pp.






\bibitem{LZ19} Chungen Liu,  Li Zuo,  Positive solutions of Yamabe-type equations with function coefficients on graphs, J. Math. Anal. Appl. 473(2) (2019) 1343-1357.
 \bibitem{LY20}Shuang Liu, Yunyan Yang, Multiple solutions of Kazdan-Warner equation on graphs in the negative case, Calc. Var. Partial Dif. 59(5) (2020)  Paper No. 164 15 pp.

\bibitem{LZ} Yingshu L\"{u}, Peirong Zhong, Existence of solutions to a generalized self-dual Chern-Simons equation on graphs, arXiv: 2107.12535.

\bibitem{SY92} J. Spruck,  Y. Yang, The existence of non-topological solitons in the self-dual Chern-Simons theory, Commun. Math.
Phys. 149 (1992) 361-376.



\bibitem{SY95} J. Spruck,  Y. Yang,  Topological solutions in the self-dual Chern-Simons theory: existence and approximation, Ann.
Inst. H. Poincar\'{e} Anal. Non Lin\'{e}aire 12 (1995) 75-97.

\bibitem{GT96} G. Tarantello, Multiple condensate solutions for the Chern-Simons-Higgs theory, J. Math. Phys. 37(8) (1996) 3769-3796.

\bibitem{TG07} G. Tarantello,  Uniqueness of self-dual periodic Chern-Simons vortices of topological-type, Calc. Var. Partial Dif. 28 (2007) 191-217




\bibitem{W91} R. Wang,  The existence of Chern-Simons vortices. Commun. Math. Phys. 137 (1991) 587-597.

\bibitem{ZC18} Xiaoxiao Zhang, Yanxun Chang, $p$-th Kazdan-Warner equation on graph in the negative case, J. Math. Anal. Appl. 466(1) (2018)  400-407.
 \bibitem{ZL18} Xiaoxiao Zhang, Aijin  Lin, Positive solutions of $p$-th Yamabe type equations on graphs,  Front. Math. China 13(6) (2018) 1501-1514.
\bibitem{ZL19} Xiaoxiao Zhang, Aijin  Lin, Positive solutions of $p$-th Yamabe type equations on infinite graphs, Proc. Amer. Math. Soc. 147(4) (2019)  1421-1427.

\end{thebibliography}
\end{document}